\numberwithin{equation}{section}
\theoremstyle{plain}
\newtheorem{theorem}{Theorem}[section]
\newtheorem{lemma}[theorem]{Lemma}
\newtheorem{proposition}[theorem]{Proposition}
\theoremstyle{definition}
\theoremstyle{remark}
\newtheorem{remark}[theorem]{Remark}
\newtheorem{case[theorem]}{Case}
\def\norm#1.#2.{\lVert#1\rVert_{#2}}
\title[On local dispersive and Strichartz estimates for the Grushin operator]
{On local dispersive and Strichartz estimates for the Grushin operator
}
\author{Sunit Ghosh, Shyam Swarup Mondal and Jitendriya Swain}
\address{Sunit Ghosh, Department of Mathematics, Indian Institute of Technology Guwahati, India.}
	\email{g.sunit@iitg.ac.in}
	\address{Shyam Swarup Mondal,  Department of Mathematics, Indian Institute of Technology Delhi, India.}
	\email{mondalshyam055@gmail.com}
	\address{Jitendriya Swain,   Department of Mathematics, Indian Institute of Technology Guwahati, India.}
	\email{jitumath@iitg.ac.in}
 \keywords{Schr\"{o}dinger wquation, Wave equation, Grushin operator, dispersive estimates,   Strichartz estimates}
\subjclass[2010]{Primary  43A80, 35R03  ; Secondary 35J10, 35Q40, 43A30.}
\date{\today}
\begin{document}
\baselineskip=20pt
\markboth{} {}

\bibliographystyle{amsplain}
\title[On local dispersive and Strichartz estimates associated with ...]
{On local dispersive and Strichartz estimates associated with the Grushin operator}


\begin{abstract}
	Let  $G=-\Delta-|x|^2\partial_{t}^2$  denote  the Grushin operator  on $\mathbb{R}^{n+1}$.	The aim of this paper is two fold. In the first part, due to the non-dispersive phenomena of the   Grushin-Schr\"odinger equation on $\mathbb{R}^{n+1}$, we establish a local dispersive estimate by defining the Grushin-Schr\"odinger kernel on a suitable domain. As a corollary we obtain a local Strichartz estimate for the Grushin-Schr\"odinger equation. In the next part, we prove a restriction theorem with respect to the scaled Hermite-Fourier transform on $\mathbb{R}^{n+2}$ for certain surfaces in $\mathbb{N}_0^n\times\mathbb{R^*}\times \mathbb{R}$ and derive anisotropic Strichartz estimates for the Grushin-Schr\"{o}dinger equation and for the Grushin wave equation as well.
\end{abstract}

\maketitle
\def\BC{{\mathbb C}} \def\BQ{{\mathbb Q}}
\def\BR{{\mathbb R}} \def\BI{{\mathbb I}}
\def\BZ{{\mathbb Z}} \def\BD{{\mathbb D}}
\def\BP{{\mathbb P}} \def\BB{{\mathbb B}}
\def\BS{{\mathbb S}} \def\BH{{\mathbb H}}
\def\BE{{\mathbb E}}
\def\BN{{\mathbb N}}
\def\LP{{W(L^p(\BR^d, \BH), L^q_v)}}
\def\LPN{{W_{\BH}(L^p, L^q_v)}}
\def\LPQ{{W_{\BH}(L^{p'}, L^{q'}_{1/v})}}
\def\L1{{W_{\BH}(L^{\infty}, L^1_w)}}
\def\LB{{L^p(Q_{1/ \beta}, \BH)}}
\def\SP{S^{p,q}_{\tilde{v}}(\BH)}
\def\f{{\bf f}}
\def\h{{\bf h}}
\def\hp{{\bf h'}}
\def\m{{\bf m}}
\def\g{{\bf g}}
\def\ga{{\boldsymbol{\gamma}}}
\vspace{-.6cm}
\section{Introduction}
Consider the free Schrödinger equation on $\mathbb{R}^n$:
\begin{align}\label{eq1}
	i \partial_s u(x,s)-\Delta u(x,s) &= 0,\quad x \in \mathbb{R}^n, \hspace{2pt} s \in \mathbb{R}\setminus \{0\}, \\
	\nonumber u(x,0) &= f(x).
\end{align}
It is well known that $e^{-i s \Delta} f$ is the unique solution to the IVP (\ref{eq1}) and can be written as
\begin{align}\label{eq2}
	u(\cdot,s)=\frac{e^{i \frac{|\cdot|^2}{4 s}}}{(4 \pi i s)^{\frac{n}{2}}} * f .	
\end{align}
An application of Young's inequality in (\ref{eq2}) gives the following dispersive estimate:
\begin{align}\label{eq3}
	\forall s \ne 0, \quad \|u(\cdot,s)\|_{L^{\infty}\left(\mathbb{R}^n\right)} \leq \frac{1}{(4 \pi|s|)^{\frac{n}{2}}}\left\|f\right\|_{L^1\left(\mathbb{R}^n\right)}.
\end{align}
Such dispersive estimate is crucial in the study of semilinear and quasilinear equations which has wide applications in physical systems (see \cite{GV,Bah6} and the references therein). The dispersive estimate (\ref{eq3}) yields the following remarkable estimate for the solution of (\ref{eq1}) by Strichartz \cite{RS} in connection with Fourier restriction theory:
\begin{align}\label{eq4}
	\|u\|_{L^q\left(\mathbb{R}, L^p\left(\mathbb{R}^n\right)\right)} \leq C(p, q)\left\|f\right\|_{L^2\left(\mathbb{R}^n\right)},
\end{align}
where $(p, q)$ satisfies the following scaling admissibility condition
$
\frac{2}{q}+\frac{n}{p}=\frac{n}{2}
$
with $ p, q \geq 2 $ and $(n, q, p) \neq(2,2, \infty)$. We refer to \cite{Bour1, Bour2, Caze, KeelT} for further study on Strichartz estimates and its connection with dispersive estimates.

In this work we aim at investigating  such phenomenon associated with the Grushin operator $G$ on $\mathbb{R}^{n+1}$ defined by
 $$G=-\Delta-|x|^2\partial_{t}^2, \quad (x, t)\in \mathbb{R}^{n}\times \mathbb{R},$$
where $|x|=\sqrt{x_1^2+\cdots+x_n^2}.$

The studies of the Grushin operator date back to Baouendi and Grushin \cite{Baouendi,Grushin70,Grushin71}. Since then several authors studied the operator extensively in different contexts, involving classification of solutions to an elliptic equations, free boundary problems in partial differential equations, well-posedness problems in Sobolev spaces etc. \cite{Anh, DM, JST, Louise}. Even though numerous studies in the direction of PDEs associated with the Grushin operator are currently available, to the best of our knowledge, the study on dispersive and Strichartz estimates for the Schr\"{o}dinger operator associated with the Grushin operator has not been addressed in the literature so far.

 Consider the following free Grushin-Schr\"odinger equation:
\begin{align}\label{schr}
	i \partial_s u(x,t,s) - G u(x&,t,s) = 0,\quad s \in \mathbb{R} ,\hspace{3pt} (x,t)\in \mathbb{R}^{n+1},
	\\ \nonumber u(x,t,0) &= f(x,t).
\end{align}
For   $f$ in $L^2(\mathbb{R}^{n+1})$,  $u(x, t, s)=e^{-isG}f(x, t)$ is the unique global in time solution   to the above  IVP (\ref{schr}).

Unlike the Euclidean case, the solution to the IVP (\ref{schr})  satisfies the following.   		
\begin{proposition}\label{thm0}
	There exists a function $f\in \mathcal{S}(\mathbb{R}^{n+1})$, the set of all Schwartz class functions on $\mathbb{R}^{n+1}$, such that the solution to the IVP (\ref{schr}) with initial data $f$ satisfies
	\begin{align}
		\forall s \in \mathbb{R}, \quad \forall (x,t) \in \mathbb{R}^{n+1}, \quad u(x,t,s) = f(x,t + s n).
	\end{align}
\end{proposition}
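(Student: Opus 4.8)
The plan is to diagonalize the evolution $e^{-isG}$ by taking the partial Fourier transform in the central variable $t$ and then choosing the initial datum out of the ground state of the resulting scaled harmonic oscillator. Write $\mathcal{F}_t$ for the Fourier transform in $t$ alone, with the convention $\mathcal{F}_t h(\lambda)=\int_{\mathbb{R}} h(t)e^{-i\lambda t}\,dt$. Under $\mathcal{F}_t$ the operator $\partial_t^2$ becomes multiplication by $-\lambda^2$, so $G$ is conjugated to the family of scaled Hermite operators $H_\lambda=-\Delta_x+\lambda^2|x|^2$ acting in the $x$-variables, and consequently $\mathcal{F}_t\big(e^{-isG}f\big)(x,\lambda)=e^{-isH_\lambda}\big(\mathcal{F}_t f\big)(x,\lambda)$. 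This reduces the problem to understanding how $e^{-isH_\lambda}$ acts on a well-chosen profile.

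Recall that $H_\lambda$ has discrete spectrum $\{(2k+n)|\lambda|:k\in\mathbb{N}_0\}$ and that its ground state ($k=0$) is the Gaussian $\Phi_\lambda(x)=(|\lambda|/\pi)^{n/4}e^{-|\lambda||x|^2/2}$, with $H_\lambda\Phi_\lambda=n|\lambda|\Phi_\lambda$. The key observation is that this lowest eigenvalue is exactly linear in $\lambda$ once a sign is fixed: for $\lambda<0$ one has $n|\lambda|=-n\lambda$, so $e^{-isH_\lambda}\Phi_\lambda=e^{-isn|\lambda|}\Phi_\lambda=e^{isn\lambda}\Phi_\lambda$, i.e.\ a pure phase that is linear in $\lambda$. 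Since a phase linear in $\lambda$ is precisely the Fourier transform of a translation in $t$, this is the mechanism producing the non-dispersive behaviour.

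Concretely, I would fix any nonzero $g\in C_c^\infty\big((-\infty,0)\big)$ and define $f$ through $\mathcal{F}_t f(x,\lambda)=g(\lambda)\Phi_\lambda(x)$. Because $g$ is smooth and compactly supported in $\lambda$ away from the origin, while $\Phi_\lambda(x)$ is a Gaussian depending smoothly on $\lambda$ on $\operatorname{supp}g$, the function $\mathcal{F}_t f$ is Schwartz in $(x,\lambda)$, whence $f\in\mathcal{S}(\mathbb{R}^{n+1})$. Applying the evolution and using $\lambda<0$ on $\operatorname{supp}g$,
$$\mathcal{F}_t\big(e^{-isG}f\big)(x,\lambda)=e^{-isH_\lambda}\big(g(\lambda)\Phi_\lambda(x)\big)=g(\lambda)e^{isn\lambda}\Phi_\lambda(x)=e^{isn\lambda}\,\mathcal{F}_t f(x,\lambda).$$
Since $e^{isn\lambda}\mathcal{F}_t f(x,\lambda)=\mathcal{F}_t\big[f(x,\cdot+sn)\big](\lambda)$, inverting $\mathcal{F}_t$ gives $u(x,t,s)=e^{-isG}f(x,t)=f(x,t+sn)$ for all $s\in\mathbb{R}$ and $(x,t)\in\mathbb{R}^{n+1}$, as claimed.

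There is no real obstacle here, only two points requiring care rather than difficulty. First, one must ensure $f$ is genuinely Schwartz and that restricting to $\lambda<0$ is legitimate; both are secured by taking $g$ compactly supported inside the negative half-line, which controls $\Phi_\lambda$ and all its $\lambda$-derivatives and avoids the degeneration of the ground state as $\lambda\to0$. Second, one should note that it is exactly the ground state, and no higher Hermite level, that reproduces the stated translation speed $n$: the $k$-th level would yield a translation by $s(2k+n)$, and $k=0$ is the choice matching $t+sn$. The final formula depends on the Fourier convention only through the sign of the support of $g$, so the precise statement of the proposition is recovered with the convention fixed above.
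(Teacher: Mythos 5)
Your proposal is correct and follows essentially the same route as the paper: the authors also take $f$ with scaled Hermite--Fourier transform supported on the ground state $\alpha=0$ and on a compact set of $\lambda$ of fixed sign (they use $Q\in C_c^\infty((1,\infty))$ and the convention $f^\lambda(x)=\int f(x,t)e^{i\lambda t}\,dt$, whereas you use $g\in C_c^\infty((-\infty,0))$ with the opposite sign convention), so that the phase $e^{-isn|\lambda|}$ becomes linear in $\lambda$ and the evolution is a translation in $t$. The only difference is presentational -- they define $f$ directly by the inversion integral and read off $\hat f(\alpha,\lambda)$, while you phrase it via $e^{-isH_\lambda}$ acting on the ground state -- and both arguments are complete.
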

\noindent In \cite{Gerd}, the authors  illustrate the above result for $n=1$. Notice that $\|u(\cdot,s)\|_p = \|f\|_p$ for all $1 \leq p \leq \infty$, which shows that one cannot expect for a global dispersive estimate of the type (\ref{eq2}).
Due to loss of dispersion, the Euclidean strategy of finding Strichartz estimates fails, and the problem of obtaining Strichartz estimates is considerably difficult.  For instance, we refer to similar situations for compact Riemannian manifolds \cite{burgain, Ivanovici,Burq}, for the Heisenberg group \cite{Bahouri-local, Gall}, for the hyperbolic space \cite{Anker} and for the nilpotent Lie groups \cite{Bahouri1, Del}.

In particular, Bahouri, G\'erard, and Xu \cite{Bahouri}  emphasized that the Schr\"odinger operator on the Heisenberg group $\mathbb{H}^d$  has no dispersion at all. Using the integral representation of the Schr\"odinger kernel on  horizontal Heisenberg strips, the authors obtained local dispersive estimates and as a by-product they establish local version of Strichartz estimate in \cite{Bahouri-local}. Furthermore, following the general strategy of Fourier restriction methods (see \cite{RS}) and using Fourier analysis tools on the Heisenberg group $\mathbb{H}^n$, Bahouri et. al. obtained an anisotropic  Strichartz estimates for the solutions to the linear  Schr\"odinger equation as well as the wave equation, for the radial initial data, on the Heisenberg group $\mathbb{H}^d$ involving the sublaplacian in  \cite{Gall}.

Since, Grushin operator is closely related with the sublaplacian on the Heisenberg group \cite{jyoti}, it is natural to investigate the local dispersive estimate and local Strichartz estimate for the IVP (\ref{schr}). Our main results are as follows:

Viewing $e^{-i s G}$ as an integral operator in the sense of distributions, the solution to (\ref{schr}) does not have the formulation of type (\ref{eq2}) in terms of the Grushin-Schr\"{o}dinger kernel $\mathcal{H}_s$. However, we compute the Grushin-Schr\"{o}dinger kernel locally on a strip in the following theorem:
\begin{theorem}\label{thm1}
	The kernel associated with the free  Grushin-Schr\"odinger equation (\ref{schr}) on the horizontal strip $\{(x,t;y,t_1) : |t-t_1| < n |s|\}$ for $s \ne 0$ is given by
	\begin{align}\label{bb}
		\mathcal{H}_s(x,t;y,t_1) =\frac{1}{(2 \pi s )^{\frac{n}{2}+1}}\int_{\mathbb{R}} e^{- \frac{\lambda (t-t_1)}{s}} \left(\frac{|\lambda|}{\sinh (2 |\lambda|)}\right)^{\frac{n}{2}} e^{\frac{i|\lambda|}{2 s }\left(x^{2}+y^{2}\right) \operatorname{coth}(2 |\lambda|)} e^{\frac{- i |\lambda| x\cdot y}{\hspace{.5pt} s\cdot\hspace{.5pt}\sinh (2 |\lambda | )}} d \lambda.
	\end{align}
\end{theorem}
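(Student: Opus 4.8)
The natural strategy is to diagonalise $G$ by taking the partial Fourier transform in the central variable $t$. Writing $\widehat{f}(x,\lambda)=\int_{\mathbb{R}}e^{-i\lambda t}f(x,t)\,dt$, the symbol of $-\partial_t^2$ is $\lambda^2$, so $G$ is carried into the family of scaled Hermite operators
\[
H(\lambda)=-\Delta_x+\lambda^2|x|^2,\qquad \lambda\neq 0,
\]
diagonalised by the scaled Hermite functions $\Phi_\alpha^\lambda$ with $H(\lambda)\Phi_\alpha^\lambda=(2|\alpha|+n)|\lambda|\,\Phi_\alpha^\lambda$. Since $\widehat{e^{-isG}f}(x,\lambda)=e^{-isH(\lambda)}\widehat{f}(x,\lambda)$, inverting the transform in $t$ represents the kernel as
\[
\mathcal{H}_s(x,t;y,t_1)=\frac{1}{2\pi}\int_{\mathbb{R}}e^{i\lambda(t-t_1)}\,p_\lambda^s(x,y)\,d\lambda,\qquad p_\lambda^s(x,y)=\sum_{\alpha}e^{-is(2|\alpha|+n)|\lambda|}\Phi_\alpha^\lambda(x)\Phi_\alpha^\lambda(y),
\]
so the task is to evaluate the inner propagator kernel $p_\lambda^s$ and then to perform the $\lambda$-integration.

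For the first step I would invoke Mehler's formula for the scaled Hermite functions, which expresses $p_\lambda^s$ as an explicit Gaussian in $(x,y)$ whose coefficients are built from the trigonometric functions $\sin(2s|\lambda|)$ and $\cos(2s|\lambda|)$. After the rescaling $\mu=s\lambda$ these arguments normalise to $2|\mu|$, the prefactor throws off the powers of $s$ visible in (\ref{bb}), and the transport phase becomes $e^{i\mu(t-t_1)/s}$; the spatial factor already has the Gaussian shape appearing in (\ref{bb}), but with trigonometric rather than hyperbolic coefficients.

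The decisive, and most delicate, step is the $\mu$-integration. For real $\mu$ the integrand is purely oscillatory — the trigonometric Mehler kernel is singular on $\tfrac{\pi}{2}\mathbb{Z}$ and does not decay — so the integral exists only as an oscillatory/distributional object. I would make it explicit by rotating the contour $\mu\mapsto i\mu$ on each half-line, a deformation that is legitimate because the underlying Hermite heat semigroup $e^{-zH(\lambda)}$ is holomorphic in $\{\operatorname{Re}z>0\}$ and $z=is$ is its boundary value. The rotation simultaneously replaces $\sin(2|\mu|),\cos(2|\mu|)$ by $\sinh(2|\mu|),\coth(2|\mu|)$, keeps the spatial phase purely imaginary, and turns $e^{i\mu(t-t_1)/s}$ into the real exponential $e^{-\mu(t-t_1)/s}$, thereby producing the integrand of (\ref{bb}). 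The point at which the strip enters is the control of the arcs at infinity together with the interchange of the $\alpha$-summation and the rotated integral: since $\bigl(|\mu|/\sinh 2|\mu|\bigr)^{n/2}\sim e^{-n|\mu|}$ as $|\mu|\to\infty$ — a decay governed precisely by the bottom eigenvalue $n|\lambda|$ of $H(\lambda)$ — the rotated integrand behaves like $e^{-|\mu|\left(n-|t-t_1|/|s|\right)}$, which is integrable exactly when $|t-t_1|<n|s|$. Thus the representation converges absolutely on the horizontal strip $\{|t-t_1|<n|s|\}$, and there one collects the constant $\tfrac{1}{(2\pi s)^{n/2+1}}$ to obtain (\ref{bb}); a short separate accounting of the signs of $s$ and $\lambda$ and of the metaplectic phase generated by the rotation closes the argument.
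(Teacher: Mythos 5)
Your overall picture --- partial Fourier transform in $t$, Mehler's formula, the rescaling $\mu=s\lambda$, and the balance between the decay $\bigl(|\mu|/\sinh 2|\mu|\bigr)^{n/2}\sim e^{-n|\mu|}$ and the growth $e^{|\mu|\,|t-t_1|/|s|}$ that singles out the strip $|t-t_1|<n|s|$ --- is the same as the paper's, and your convergence criterion is exactly the one the paper uses. But the step you yourself call decisive, the contour rotation $\mu\mapsto i\mu$, is not a valid argument as stated, and it is precisely the step the paper is structured to avoid. The trigonometric Mehler kernel of $e^{-isH(\lambda)}$ carries singularities of the form $|\sin(2|\mu|)|^{-n/2}$ at every point of $\tfrac{\pi}{2}\mathbb{N}$, which for $n\ge 2$ are not even locally integrable; the real-axis integral you propose to start from therefore does not exist as a contour integral (only as a distribution with an unspecified regularisation), so there is nothing to rotate. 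Even granting a regularisation, those singularities sit \emph{on} the original contour, Cauchy's theorem cannot be applied across them, and the integrand has no decay on arcs hugging the real axis, so the deformation to the imaginary axis cannot be closed. Your appeal to the holomorphy of $z\mapsto e^{-zH(\lambda)}$ on $\{\operatorname{Re}z>0\}$ is the right instinct, but holomorphy in the \emph{time} variable does not license a contour deformation in the \emph{spectral} variable; it is the correct substitute for that deformation, not a justification of it.

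That substitute is what the paper actually carries out. It never writes the oscillatory Mehler kernel: it starts from the heat kernel for $\operatorname{Re}z>0$, where the hyperbolic Mehler formula is classical and the integral converges absolutely, performs the change of variables $z\lambda\mapsto\lambda$ \emph{there} to obtain $K^2_z$ as in (\ref{ab}), shows that $K^2_z$ and the spectral-sum kernel $K^1_z$ are holomorphic on the overlapping domains $\tilde{D}_{|t-t_1|}=\{|z|>|t-t_1|/C,\ \operatorname{Re}z>0\}$ and $\{\operatorname{Re}z>0\}$, identifies them by analytic continuation from the real axis, and only then lets $z_p\to is$, applying dominated convergence to both representations: on the spectral side, tested against Schwartz data, to identify the boundary value with $e^{-isG}$, and on the integral side to obtain (\ref{bb}) pointwise on the strip. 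To repair your argument you would need to replace the rotation by this continuation in $z$, and you would also need to supply the identification step showing that the limit really is the kernel of $e^{-isG}$ in the distributional sense --- your sketch omits this, and the ``short separate accounting of the metaplectic phase'' you defer is exactly where these difficulties are concentrated.
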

Using the above integral representation of the Schr\"odinger kernel on horizontal strips, we establish the following local dispersive estimate.

\begin{theorem}\label{thm2}
	Given $w_0 \in \mathbb{R}^{n+1}$, let $f$ be supported in a ball $B(w_0,R_0)$ with center at $w_0$ and radius $R_0$. Then, for any positive constant $k < n$ and for all $2 \leq p \leq \infty$, the solution to the IVP (\ref{schr}) associated to the initial data $f$ satisfies the following local dispersive estimate:
	\begin{align}\label{ai}
		\|u(\cdot,s)\|_{L^p(B(w_0,\frac{1}{2} k |s| ))} \leq \left(\frac{M}{|s|^{\frac{n}{2}+1}}\right)^{1-\frac{2}{p}}\|f\|_{L^{p'}(\mathbb{R}^{n+1})},
	\end{align}
    for all $|s| \geq \frac{2 R_0}{n-k}$,~~ with  $M = \frac{1}{(2 \pi )^{\frac{n}{2}+1}} \displaystyle\int_{\mathbb{R}}\left(\frac{|\lambda|}{\sinh (2 |\lambda|)}\right)^{\frac{n}{2}} e^{\frac{n|\lambda|}{2}} d\lambda$
	and $\frac{1}{p} + \frac{1}{p'} = 1$.
\end{theorem}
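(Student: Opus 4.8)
The plan is to run the classical dispersive-estimate scheme: establish the two endpoint bounds --- an $L^1\to L^\infty$ dispersive estimate coming from Theorem~\ref{thm1} together with the $L^2\to L^2$ conservation of mass --- and then interpolate between them. Writing $w_0=(z_0,\tau_0)$ with $z_0\in\mathbb{R}^n$, $\tau_0\in\mathbb{R}$, regard $e^{-isG}$, restricted to data supported in $B(w_0,R_0)$ and evaluated on $B(w_0,\frac12 k|s|)$, as a single linear operator $T_s$; estimate (\ref{ai}) is exactly its $L^{p'}\to L^p$ bound. The first thing to check is that the kernel representation (\ref{bb}) is usable on the whole region of integration: if $(x,t)\in B(w_0,\frac12 k|s|)$ and $(y,t_1)\in\operatorname{supp}f\subseteq B(w_0,R_0)$, then the last-coordinate bounds $|t-\tau_0|<\frac12 k|s|$ and $|t_1-\tau_0|<R_0$ give $|t-t_1|<\frac12 k|s|+R_0$. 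The hypotheses $0<k<n$ and $|s|\ge\frac{2R_0}{n-k}$ are precisely equivalent to $R_0\le\frac{n-k}{2}|s|$, which upgrades this to $|t-t_1|<\frac n2|s|<n|s|$; hence $(x,t;y,t_1)$ lies inside the horizontal strip of Theorem~\ref{thm1} and (\ref{bb}) applies throughout.

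The key step is the pointwise bound on $\mathcal{H}_s$. In (\ref{bb}) the two factors $e^{\frac{i|\lambda|}{2s}(x^2+y^2)\coth(2|\lambda|)}$ and $e^{-i|\lambda|x\cdot y/(s\sinh(2|\lambda|))}$ have modulus one, since $x^2,y^2,x\cdot y,s,\lambda$ are all real, whereas $e^{-\lambda(t-t_1)/s}$ is a genuine real exponential. Using $|t-t_1|<\frac n2|s|$ I would estimate $e^{-\lambda(t-t_1)/s}\le e^{|\lambda|\,|t-t_1|/|s|}\le e^{n|\lambda|/2}$ and pull the absolute value inside the integral to obtain
\begin{align}
	|\mathcal{H}_s(x,t;y,t_1)|\le\frac{1}{(2\pi|s|)^{\frac n2+1}}\int_{\mathbb{R}}\left(\frac{|\lambda|}{\sinh(2|\lambda|)}\right)^{\frac n2}e^{\frac{n|\lambda|}{2}}\,d\lambda=\frac{M}{|s|^{\frac n2+1}}.
\end{align}
It remains to note that $M<\infty$: the integrand is bounded near $\lambda=0$ because $|\lambda|/\sinh(2|\lambda|)\to\frac12$, and as $|\lambda|\to\infty$ one has $(|\lambda|/\sinh(2|\lambda|))^{n/2}\sim(2|\lambda|)^{n/2}e^{-n|\lambda|}$, so the full integrand decays like $(2|\lambda|)^{n/2}e^{-n|\lambda|/2}$.

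The endpoints then follow at once. For $(x,t)\in B(w_0,\frac12 k|s|)$ the representation $u(x,t,s)=\int \mathcal{H}_s(x,t;y,t_1)f(y,t_1)\,dy\,dt_1$ (valid since the entire support stays in the strip) together with the kernel bound gives $\|u(\cdot,s)\|_{L^\infty(B(w_0,\frac12 k|s|))}\le\frac{M}{|s|^{n/2+1}}\|f\|_{L^1}$, the dispersive endpoint for $T_s$. The $L^2$ endpoint is automatic: $G$ is self-adjoint, so $e^{-isG}$ is unitary on $L^2(\mathbb{R}^{n+1})$ and $\|u(\cdot,s)\|_{L^2(B(w_0,\frac12 k|s|))}\le\|f\|_{L^2}$. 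Applying the Riesz--Thorin interpolation theorem to $T_s$ between these two pairs, with parameter $\theta=1-\frac2p$ (which sends the input space $L^{p'}$ to the output space $L^p$), produces the operator-norm bound $1^{1-\theta}\big(M/|s|^{n/2+1}\big)^{\theta}=(M/|s|^{n/2+1})^{1-2/p}$, i.e. (\ref{ai}).

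I expect the only genuinely delicate point to be the geometric bookkeeping of the first paragraph: one must simultaneously keep the entire region of integration inside the strip of Theorem~\ref{thm1} and extract the sharper exponent $e^{n|\lambda|/2}$ matching the constant $M$. Both requirements are encoded in the single inequality $\frac12 k|s|+R_0\le\frac n2|s|$, which is exactly why $k<n$ and $|s|\ge\frac{2R_0}{n-k}$ must appear together. Everything after the kernel bound is the standard dispersive-to-interpolation machinery.
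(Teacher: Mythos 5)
Your proposal is correct and follows essentially the same route as the paper: restrict to the strip via the inequality $|t-t_1|<\tfrac12 k|s|+R_0\le \tfrac n2|s|<n|s|$, bound the kernel pointwise by $M/|s|^{\frac n2+1}$ (noting the oscillatory factors are unimodular and the real exponential is at most $e^{n|\lambda|/2}$), combine the resulting $L^1\to L^\infty$ estimate with the $L^2$ conservation from unitarity of $e^{-isG}$, and interpolate with $\theta=1-\tfrac2p$. No discrepancies worth noting.
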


Using the local dispersive estimate  (\ref{ai}),  we only obtain the local solution of (\ref{schr}) and establish the following local Strichartz estimate.
\begin{theorem}\label{thm3}
	Given $k < n$, if  $(p, q)$ lies in the admissible set  $$A_0 = \left\{(p,q) : 2 \leq p \leq\infty\quad \text{and}\quad \frac{1}{p} + \frac{1}{q} = \frac{1}{2} \right\},$$ then for all $f \in L^2(\mathbb{R}^{n+1})$ supported in the ball $B(w_0,R_0)$, with some $w_0 \in \mathbb{R}^{n+1}$, the solution to the IVP (\ref{schr}) associated to the initial data $f$ satisfies the following local Strichartz estimate:
	\begin{align}\label{bf}
		\|u\|_{L^q((-\infty,-C_k R_0])\cup [C_kR_0,\infty);L^p(B(w_0,\frac{1}{2} k s ))} \leq C(q,k) \|f\|_{L^2(\mathbb{R}^{n+1})},
	\end{align}
	where $C_k = \frac{2}{n-k}$.
\end{theorem}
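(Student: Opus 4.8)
The plan is to derive \eqref{bf} from the dispersive bound of Theorem \ref{thm2} together with $L^2$-conservation, via the duality ($TT^*$) argument that produces the Euclidean Strichartz estimate \eqref{eq4} from \eqref{eq3} (cf. \cite{RS,KeelT}). Write $U(s)=e^{-isG}$, so that $U$ is a unitary group on $L^2(\mathbb{R}^{n+1})$ and $\|U(s)f\|_{L^2}=\|f\|_{L^2}$ for every $s$; this is already the energy endpoint $(p,q)=(2,\infty)$ of $A_0$, for which \eqref{bf} is immediate since $\|\mathbf{1}_{B(w_0,\frac12 k|s|)}U(s)f\|_{L^2}\le\|f\|_{L^2}$ uniformly in $s$. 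Denote by $I=(-\infty,-C_kR_0]\cup[C_kR_0,\infty)$ the admissible time set and set $B_s=B(w_0,\tfrac12 k|s|)$; the estimate \eqref{bf} is then the boundedness of the operator $T\colon f\mapsto\big(s\mapsto \mathbf{1}_{B_s}U(s)f\big)$ from $L^2(\mathbb{R}^{n+1})$ into $L^q_s(I)L^p_x$.

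By the standard $TT^*$ reduction, $\|T\|^2=\|TT^*\|$, so it suffices to prove $\|TT^*G\|_{L^q_s(I)L^p_x}\le C\|G\|_{L^{q'}_s(I)L^{p'}_x}$, where $TT^*G(s)=\mathbf{1}_{B_s}\int_I U(s-s')\,\mathbf{1}_{B_{s'}}G(s')\,ds'$. First I would apply the interpolated form of \eqref{ai}, namely $\|U(\sigma)h\|_{L^p}\le (M|\sigma|^{-(n/2+1)})^{1-2/p}\|h\|_{L^{p'}}$, to each propagator $U(s-s')$ acting on the localized datum $\mathbf{1}_{B_{s'}}G(s')$; this converts the vector-valued kernel bound into the scalar convolution inequality $\|TT^*G(s)\|_{L^p_x}\le C\int_I |s-s'|^{-(n/2+1)(1-2/p)}\,\|G(s')\|_{L^{p'}_x}\,ds'$. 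I would then estimate the right-hand side in $L^q_s(I)$ by Young's inequality: the exponent needed to map $L^{q'}_s\to L^q_s$ by convolution is $q/2$, and the admissibility $\tfrac1p+\tfrac1q=\tfrac12$ gives $1-\tfrac2p=\tfrac2q$, so the decay exponent equals $(n/2+1)\tfrac2q>\tfrac2q$ for $n\ge1$; hence the truncated kernel $|s|^{-(n/2+1)(1-2/p)}\mathbf{1}_{\{|s|\ge CR_0\}}$ lies in $L^{q/2}(\mathbb{R})$, and Young's inequality yields $\|TT^*G\|_{L^q_sL^p_x}\le C(q,k)\|G\|_{L^{q'}_sL^{p'}_x}$. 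The claim for all intermediate $(p,q)\in A_0$ then follows.

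The main obstacle is the $s$-dependence of both the spatial balls $B_s$ and the validity threshold $|s|\ge C_kR_0$ of Theorem \ref{thm2}. In the $TT^*$ kernel the datum $\mathbf{1}_{B_{s'}}G(s')$ is supported in $B_{s'}=B(w_0,\tfrac12 k|s'|)$, so Theorem \ref{thm2} applies to $U(s-s')$ only when the elapsed time obeys $|s-s'|\ge \tfrac{k|s'|}{n-k}$ and, even then, controls the output on $B(w_0,\tfrac12 k|s-s'|)$ rather than on $B_s$ directly, so one must ensure $B_s$ is captured by the region where the bound holds. I expect to handle this by decomposing $I\times I$ according to the signs and relative sizes of $s,s'$, enlarging balls where necessary so that on each piece Theorem \ref{thm2} applies with a constant uniform in $(s,s')$, and treating the remaining near-diagonal region (where $|s-s'|$ is small and the dispersive bound is unavailable) by exploiting the support propagation recorded in Proposition \ref{thm0}. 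Reconciling these geometric constraints while keeping the convolution kernel in $L^{q/2}$ is where the argument requires the most care; the concluding interpolation between the two endpoints of $A_0$ is then routine.
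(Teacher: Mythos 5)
Your $TT^*$ strategy is not the paper's route, and as sketched it does not close. The paper's proof is a two-line direct argument: since $f$ is supported in $B(w_0,R_0)$, H\"older's inequality gives $\|f\|_{L^{p'}(\mathbb{R}^{n+1})}\le R_0^{(n+1)(\frac12-\frac1p)}\|f\|_{L^2(\mathbb{R}^{n+1})}$, so the dispersive bound (\ref{ai}) becomes $\|u(\cdot,s)\|_{L^p(B(w_0,\frac12 k|s|))}\le C(k)\,R_0^{(n+1)(\frac12-\frac1p)}|s|^{-(n+2)(\frac12-\frac1p)}\|f\|_{L^2(\mathbb{R}^{n+1})}$ for $|s|\ge C_kR_0$; taking the $L^q$ norm in $s$ over $|s|\ge C_kR_0$ and using $\frac1p+\frac1q=\frac12$ (which makes the power of $|s|$ equal to $-(n+2)/q$, hence $q$-integrable away from the origin, and makes the powers of $R_0$ cancel exactly) yields (\ref{bf}). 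No duality, no convolution in time, and no interpolation between endpoints is needed: the compact support of the datum converts the $L^{p'}\to L^p$ dispersive estimate into an $L^2\to L^p$ one for free, which is precisely why the theorem is stated only for compactly supported data.

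The genuine gap in your proposal is the near-diagonal region of the $TT^*$ kernel. Theorem \ref{thm2} applies to $U(s-s')\mathbf{1}_{B_{s'}}G(s')$ only when $|s-s'|\ge \frac{k|s'|}{n-k}$, a constraint that grows linearly in $|s'|$ and is not translation invariant, so the kernel you feed to Young's inequality is not a function of $s-s'$ alone. More seriously, on the complementary region you propose to use ``the support propagation recorded in Proposition \ref{thm0}'', but that proposition records no such thing: it exhibits one particular Schwartz datum whose evolution happens to be a translate in $t$, and the Grushin--Schr\"odinger flow, like any Schr\"odinger-type flow, has no finite speed of propagation, so $\mathbf{1}_{B_s}U(s-s')\mathbf{1}_{B_{s'}}$ cannot be controlled by support considerations when $|s-s'|$ is small. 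There is also the unresolved mismatch, which you flag yourself, between the ball $B(w_0,\frac12 k|s-s'|)$ on which (\ref{ai}) controls the output and the target ball $B_s$. None of these issues arises in the paper's argument.
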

The remaining part of the paper aims to establish anisotropic Strichartz estimate for the Grushin-Schr\"{o}dinger equation (\ref{schr}) and for the Grushin wave equation:
\begin{align}\label{ay}
	\partial_s^2 u&(x,t,s) + Gu(x,t,s) = 0 \quad s \in \mathbb{R} ,\hspace{3pt} (x,t)\in \mathbb{R}^{n+1}, \\
	u&(x,t,0) = f(x,t), \quad \partial_su(x,t,0) = g(x,t).\nonumber
\end{align}
In order to achieve this goal we first obtain the restriction theorem analogue to Bahouri et. al. \cite{Gall} (see also M\"{u}ller \cite{Muller} ) for  the scaled Hermite-Fourier transform (defined below) restricted to certain surfaces in $\mathbb{N}_0^n \times \mathbb{R}^* \times \mathbb{R}$. At this stage, we refer to Liu and Song \cite{Manli} for a restriction theorem associated with the Grushin operator.

We consider the mixed Lebesgue spaces $L^{r}_t(\mathbb{R};L^q_s(\mathbb{R};L^p_x(\mathbb{R}^n)))$, for $1 \leq p,q ,r \leq \infty$ with the mixed norm $$\|f\|_{L_{t}^{r} L_s^q L_{x}^{p}} = \left(\int_{\mathbb{R}}\left(\int_{\mathbb{R}}\left(\int_{\mathbb{R}^{n+1}} |f(x,t,s)|^p dx\right)^{\frac{q}{p}}ds\right)^{\frac{r}{q}}dt\right)^{\frac{1}{r}}.$$
For $f \in \mathcal{S}(\mathbb{R}^{n+2})$, the set of all Schwartz class functions on $\mathbb{R}^{n+2}$
, let
\begin{align}
	f^{\lambda,\nu}(x)=\int_{\mathbb{R}}\int_{\mathbb{R}} f(x, t, s) e^{i \lambda t}e^{i \nu s} d t ds
\end{align}
stand for the inverse Fourier transform of $ f(x, t, s)$ in the $(t, s)$ variable. We define the scaled Hermite-Fourier transform of $f$ on $\mathbb{R}^{n+2}$ as
\begin{align}\label{def2}
	\hat{f}(\alpha,\lambda,\nu)=\int_{\mathbb{R}^n} \int_{\mathbb{R}}\int_{\mathbb{R}} e^{i\lambda t} e^ {i \nu s}f(x,t,s)\Phi_\alpha^\lambda(x) ds dt  dx = \langle f^{\lambda,\nu},\Phi_\alpha^\lambda\rangle,
\end{align}
for any $(\alpha,\lambda,\nu) \in \mathbb{N}_0^n \times \mathbb{R}^* \times \mathbb{R}$. Given a surface $S$ in $ \mathbb{N}_0^n \times \mathbb{R}^* \times \mathbb{R}$ endowed with an induced measure $d \sigma$, we define the restriction operator $\mathcal{R}_S: L^2(\mathbb{R}^{n+2})\to L^2(S,d\sigma)$ defined by
\begin{align}
	\mathcal{R}_{S} f = \hat{f}|_{S},
\end{align} on the surface $S$
and the operator dual to $\mathcal{R}_{S}$ (called the extension operator) as
\begin{align}\label{ak}
	\mathcal{E}_{S} (\varTheta)(x,t,s) = \frac{1}{(2 \pi)^2}\int_{S}  e^{-i \nu s} e^{-i \lambda t} \varTheta (\alpha,\lambda,\nu)\hspace{2pt}\Phi_{\alpha}^\lambda (x)\hspace{4pt} d\sigma,
\end{align}$\varTheta\in L^2(S,d\sigma).$
Taking the surface $S = \{(\alpha,\lambda,\nu) \in \mathbb{N}_0^n \times \mathbb{R}^* \times \mathbb{R} : \nu = (2 |\alpha| + n )|\lambda| \}$ with a localized induced measure $d\sigma_{loc}$ (defined in Section \ref{sec4} precisely), we obtain the following restriction theorem for Scaled Hermite-Fourier transform.
\begin{theorem}[Scaled Hermite-Fourier restriction theorem]\label{thm4}
	If $1 \leq q \leq p < 2$, then
	\begin{align}\label{at}
		\|\mathcal{R}_{S_{loc}} f\|_{L^2(S,d\sigma_{loc})} \leq C(p,q) \|f\|_{L_t^1L^q_sL^p_x},
	\end{align}
	for all functions $f \in \mathcal{S}(\mathbb{R}^{n+2})$.
\end{theorem}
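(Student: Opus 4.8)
The plan is to reduce the claimed mixed-norm restriction estimate to a parameter-free restriction estimate in the $(x,s)$ variables alone, exploiting the $L^1_t$ norm through Minkowski's inequality, and then to run a Stein--Tomas-type argument on the Hermite side. First I would remove the $t$-variable. Writing $f_t(x,s)=f(x,t,s)$ and performing the $s$-Fourier transform before the $t$-integration, one checks that on $S$ the restriction factorizes as
\[
\mathcal R_S f(\alpha,\lambda)=\int_{\mathbb R} e^{i\lambda t}\,(T_0 f_t)(\alpha,\lambda)\,dt,\qquad (T_0 g)(\alpha,\lambda):=\big\langle \widehat g(\cdot,(2|\alpha|+n)|\lambda|),\Phi_\alpha^\lambda\big\rangle,
\]
where $\widehat g(x,\nu)=\int_{\mathbb R} g(x,s)e^{i\nu s}\,ds$. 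Since $|e^{i\lambda t}|=1$, Minkowski's integral inequality for the $L^2(S,d\sigma_{loc})$-valued integral gives $\|\mathcal R_S f\|_{L^2(S)}\le\int_{\mathbb R}\|T_0 f_t\|_{L^2(S)}\,dt$. Hence it suffices to prove the parameter-free estimate $\|T_0 g\|_{L^2(S,d\sigma_{loc})}\le C(p,q)\,\|g\|_{L^q_s L^p_x}$ for $g\in\mathcal S(\mathbb R^{n+1})$; integrating in $t$ then reproduces the $L^1_t L^q_s L^p_x$ norm on the right.

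Next I would expand $\|T_0 g\|^2_{L^2(S)}$ and group the Hermite indices by their level $k=|\alpha|$, using $\sum_{|\alpha|=k}|\langle v,\Phi_\alpha^\lambda\rangle|^2=\|P_k^\lambda v\|_{L^2_x}^2$, where $P_k^\lambda$ is the projection onto the $k$-th eigenspace of the scaled Hermite operator $-\Delta+\lambda^2|x|^2$. This gives
\[
\|T_0 g\|^2_{L^2(S)}=\sum_{k\ge 0}\int_{\mathbb R^*}\big\|P_k^\lambda\,\widehat g(\cdot,(2k+n)|\lambda|)\big\|^2_{L^2_x}\,m_k(\lambda)\,d\lambda,
\]
with $m_k$ the density of $d\sigma_{loc}$ on the $k$-th sheet. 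The key analytic input is the $L^p_x\to L^2_x$ bound for the scaled Hermite projections, $\|P_k^\lambda u\|_{L^2_x}\le c_k\,|\lambda|^{\beta}\,\|u\|_{L^p_x}$, the rescaled form of the Thangavelu/Koch--Tataru projection estimates valid for $p<2$. Substituting $\nu=(2k+n)|\lambda|$ decouples the frequency $\nu$ in $\widehat g(\cdot,\nu)$ from the level $k$; after summation in $k$ the localized measure should be engineered so that the resulting $k$-series collapses to a \emph{bounded, compactly supported} weight $W(\nu)$, leaving $\int_{\mathbb R} W(\nu)\,\|\widehat g(\cdot,\nu)\|^2_{L^p_x}\,d\nu\le C\|g\|^2_{L^q_s L^p_x}$.

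To close this last inequality I would argue as follows. Since $p<2$, Minkowski's inequality swaps the order to $\|W^{1/2}\widehat g\|_{L^p_x L^2_\nu}$; the compact support of $W$ together with Hölder (using $q'\ge 2$) passes from $L^2_\nu$ to $L^{q'}_\nu$ on a set of finite measure, and Hausdorff--Young in $s$ then yields $L^q_s$. A final Minkowski exchange, legitimate precisely because $q\le p$, converts $\|g\|_{L^p_x L^q_s}$ into the desired $\|g\|_{L^q_s L^p_x}$. This is exactly where both hypotheses $q\le p$ and $p<2$ are consumed: $p<2$ powers the projection estimate and the first Minkowski swap, while $q\le p$ (with $q\le 2$) is what makes the closing exchange valid.

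The hard part will be the projection step: obtaining projection estimates that are uniform in $\lambda$ with the sharp exponent $\beta$, and matching the factor $c_k^2(2k+n)^{-1-2\beta}$ against the growth of the eigenvalue multiplicities $d_k\sim k^{n-1}$ so that the $k$-summation converges. This is precisely what forces the passage to the \emph{localized} measure $d\sigma_{loc}$, and I expect the genuinely delicate point to be verifying that its density tames the multiplicities while keeping the resulting weight $W$ bounded and compactly supported; the coupling of the Hermite scale $\lambda$ to the frequency $\nu$ through the relation $\nu=(2k+n)|\lambda|$ is the structural feature that makes this nontrivial.
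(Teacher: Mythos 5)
Your proposal is correct, and it reaches the estimate by a direct $T$-type argument rather than the paper's $TT^*$ argument: the paper dualizes, computes the kernel of $\mathcal{E}_{S_{loc}}(\mathcal{E}_{S_{loc}})^*$, and bounds it from $L^1_tL^q_sL^p_x$ to $L^\infty_tL^{q'}_sL^{p'}_x$ using the $L^p\to L^{p'}$ projection bound $\|P_k(\lambda)\phi\|_{p'}\lesssim |\lambda|^{\frac n2(1-\frac2{p'})}(2k+n)^{\frac{n-1}{2}(1-\frac2{p'})}\|\phi\|_p$ (interpolated from the trivial $L^2$ bound and the $L^1\to L^\infty$ kernel bound), two applications of Hausdorff--Young (in $\lambda\to s$ and in $s\to\lambda$), and Minkowski/H\"older exchanges; you instead peel off the $t$-integration by Minkowski, expand $\|\mathcal R_Sf\|_{L^2(S,d\sigma_{loc})}^2$ via the orthogonality $\sum_{|\alpha|=k}|\langle v,\Phi_\alpha^\lambda\rangle|^2=\|P_k(\lambda)v\|_{L^2}^2$, and use the $L^p\to L^2$ projection bound, which is exactly the square root of the paper's Lemma 5.1 via $\|P_k u\|_2^2=\langle P_ku,u\rangle\le\|P_ku\|_{p'}\|u\|_p$ --- so no sharp Koch--Tataru input is needed, and the "hard part" you flag is already settled by that interpolation. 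After the substitution $\nu=(2k+n)|\lambda|$ both arguments produce the same convergent series $\sum_k(2k+n)^{-1-\frac12(1-\frac2{p'})}$, and your closing chain (Minkowski using $p<2$, H\"older on the compact support of $\psi$ using $q'\ge2$, Hausdorff--Young using $q\le 2$, Minkowski using $q\le p$) consumes the hypotheses in the same places the paper does. One small correction of emphasis: the localized measure is not what tames the eigenvalue multiplicities --- those are absorbed into the projection bound, and the $k$-sum converges for any bounded $\psi$; the cutoff $\psi$ is needed only so that H\"older can pass from $L^2_\nu$ to $L^{q'}_\nu$ on a set of finite measure. Your version is slightly leaner (one Hausdorff--Young instead of two, with Plancherel on the $L^2(S)$ side doing the rest), while the paper's $TT^*$ formulation hands you the dual extension estimate (\ref{ao}) explicitly, which is the form actually used to derive the Strichartz estimates in Section \ref{sec5}.
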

\noindent By duality, Theorem \ref{thm4} can be reframed as follows: for any $2 < p' \leq q' \leq \infty$,
	\begin{align}\label{ao}
			\|\mathcal{E}_{S_{loc}}(\varTheta)\|_{L_t^{\infty} L^{q'}_s L^{p'}_x} \leq C(p,q)\|\varTheta\|_{L^2(S,d\sigma_{loc})}
		\end{align}
holds for all $\varTheta \in L^2(S,d\sigma_{loc})$.

Now, realizing the solution of (\ref{schr}) as the extension operator $\mathcal{E}_{loc}$ acting on a suitable function on $S$, using (\ref{ao}) and the density of frequency localized functions, we prove the following anisotropic  Strichartz estimates for the solutions to the free Grushin-Schr\"odinger equation.
\begin{theorem}\label{thm5}
	Let $f\in L^2(\mathbb{R}^{n+1})$. If  $(p, q)$ lies in the admissible set
	$$A = \left\{(p,q) :  2 < p \leq q \leq \infty \quad \mbox{and} \quad \frac{2}{q} + \frac{ n}{p} = \frac{n+2}{2}  \right\},$$ then the solution $u(x,t,s) = e^{- i s G} f(x,t)$ of the IVP (\ref{schr}) is in $L^{\infty}_t(\mathbb{R};L^q_s(\mathbb{R};L^p_x(\mathbb{R}^n)))$ and satisfies the estimate:
	$$\|e^{-i s G} f(x,t)\|_{L_{t}^{\infty} L_s^q L_{x}^{p}}\leq C\|f\|_{L^2(\mathbb{R}^{n+1})}.$$
\end{theorem}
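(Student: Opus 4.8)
The plan is to prove Theorem~\ref{thm5} by the Fourier restriction method, realizing the Grushin--Schr\"odinger flow as the extension operator of (\ref{ak}) restricted to the characteristic surface $S=\{(\alpha,\lambda,\nu):\nu=(2|\alpha|+n)|\lambda|\}$ and then invoking the dual restriction inequality (\ref{ao}). The starting point is the spectral resolution of $G$: the Fourier transform in $t$ conjugates $G$ into the scaled Hermite operator $H(\lambda)=-\Delta_x+\lambda^2|x|^2$, whose eigenfunctions $\Phi_\alpha^\lambda$ satisfy $H(\lambda)\Phi_\alpha^\lambda=(2|\alpha|+n)|\lambda|\,\Phi_\alpha^\lambda$. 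Hence, for $f\in\mathcal{S}(\mathbb{R}^{n+1})$, the solution admits the representation
\begin{align*}
 e^{-isG}f(x,t)=\frac{1}{2\pi}\sum_{\alpha\in\mathbb{N}_0^n}\int_{\mathbb{R}}e^{-i\lambda t}\,e^{-is(2|\alpha|+n)|\lambda|}\,\langle f^\lambda,\Phi_\alpha^\lambda\rangle\,\Phi_\alpha^\lambda(x)\,d\lambda,
\end{align*}
where $f^\lambda(x)=\int_{\mathbb{R}}f(x,t)e^{i\lambda t}\,dt$.

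First I would compare this formula with the definition (\ref{ak}) of $\mathcal{E}_{S_{loc}}$. On $S$ the factor $e^{-i\nu s}$ with $\nu=(2|\alpha|+n)|\lambda|$ is precisely the Schr\"odinger evolution factor above, so after fixing the normalisation of the induced measure one has $u(x,t,s)=\mathcal{E}_{S_{loc}}(\varTheta)(x,t,s)$ with $\varTheta(\alpha,\lambda,\nu)\big|_{S}=\langle f^\lambda,\Phi_\alpha^\lambda\rangle$; that is, $\varTheta$ is the scaled Hermite--Fourier transform of the initial datum carried onto the characteristic surface. The solution is therefore literally an extension from $S$, and the desired estimate reduces to a mapping property of $\mathcal{E}_{S_{loc}}$. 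Matching the exponents $(p,q)$ of Theorem~\ref{thm5} with the pair $(p',q')$ of (\ref{ao})---the range $2<p\le q\le\infty$ being exactly the admissible one there---the inequality (\ref{ao}) yields
\begin{align*}
 \|e^{-isG}f\|_{L_t^\infty L_s^q L_x^p}=\|\mathcal{E}_{S_{loc}}(\varTheta)\|_{L_t^\infty L_s^q L_x^p}\le C(p,q)\,\|\varTheta\|_{L^2(S,d\sigma_{loc})}.
\end{align*}

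It then remains to dominate the surface norm by $\|f\|_{L^2(\mathbb{R}^{n+1})}$, and this is where Plancherel for the scaled Hermite--Fourier transform enters. Combining the orthonormality of $\{\Phi_\alpha^\lambda\}_{\alpha}$ in $L^2(\mathbb{R}^n)$ for each fixed $\lambda\neq0$ with Plancherel in the $t$-variable gives $\sum_{\alpha}\int_{\mathbb{R}}|\langle f^\lambda,\Phi_\alpha^\lambda\rangle|^2\,d\lambda=2\pi\,\|f\|_{L^2(\mathbb{R}^{n+1})}^2$, so with the normalisation of $d\sigma_{loc}$ fixed in Section~\ref{sec4} one obtains $\|\varTheta\|_{L^2(S,d\sigma_{loc})}\lesssim\|f\|_{L^2(\mathbb{R}^{n+1})}$ for frequency-localized data. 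A bounded-extension argument---using the density of frequency-localized functions in $L^2(\mathbb{R}^{n+1})$ together with the uniformity of the constant $C(p,q)$---then upgrades the estimate to arbitrary $f\in L^2(\mathbb{R}^{n+1})$ and simultaneously shows $u\in L_t^\infty L_s^q L_x^p$.

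The scaling relation $\frac{2}{q}+\frac{n}{p}=\frac{n+2}{2}$ enters as the scale-invariance condition for the whole scheme. Under the Grushin dilation $(x,t)\mapsto(\delta x,\delta^2 t)$, of homogeneous dimension $Q=n+2$, one has $\|f(\delta\,\cdot\,,\delta^2\,\cdot\,)\|_{L^2}=\delta^{-(n+2)/2}\|f\|_{L^2}$, whereas the mixed norm $\|\cdot\|_{L_t^\infty L_s^q L_x^p}$ rescales by $\delta^{-(n/p+2/q)}$; equality of these two exponents is exactly the admissibility condition, and it is what allows the localized estimate (\ref{ao}) to be glued, with a constant independent of the frequency scale, across a Littlewood--Paley decomposition in the spectral parameter $\nu$. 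I expect the principal difficulty to lie in precisely this bookkeeping: fixing the normalisation of $d\sigma_{loc}$ so that the extension identity $u=\mathcal{E}_{S_{loc}}(\varTheta)$ holds exactly and Plancherel delivers the surface bound with a scale-uniform constant, and then summing the dyadic pieces in the non-Hilbertian mixed norm $L_t^\infty L_s^q L_x^p$ by almost-orthogonality in the $s$-variable.
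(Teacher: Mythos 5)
Your proposal follows essentially the same route as the paper: identify $e^{-isG}f$ with $\mathcal{E}_{S_{loc}}(\varTheta)$ for frequency-localized data, apply the dual restriction estimate (\ref{ao}), bound the surface norm by $\|f\|_{L^2(\mathbb{R}^{n+1})}$ via Plancherel, and use the Grushin dilation together with the admissibility condition $\frac{2}{q}+\frac{n}{p}=\frac{n+2}{2}$ to make the constant independent of the localization radius $R$ before concluding by density. The only superfluous element is your anticipated Littlewood--Paley summation with almost-orthogonality in $s$: the paper never decomposes $f$ dyadically, since the density of functions frequency-localized in a single ball $B(0,R)$, combined with the $R$-uniform bound applied to differences, already closes the argument.
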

\noindent Arguing as in the Theorem \ref{thm5}, for the surface $S_0$ (defined in Remark \ref{rem1}) and making use of scaled Hermite-Fourier restriction theorem for the surface $S_0$, we prove the following anisotropic Strichartz estimate for the solution to the free Grushin wave equation.
\begin{theorem}\label{thm7}
	Let $f \in L^2(\mathbb{R}^{n+1}) $ and $G^{-1/2}g \in L^2(\mathbb{R}^{n+1})$. If  $(p, q)$ lies in the admissible set
	$$A_w = \left\{(p,q) : 2 < p \leq q \leq \infty\quad \mbox{and} \quad \frac{1}{q} + \frac{ n}{p} = \frac{n+2}{2}\right\},$$ then the solution $u(x,t,s)$ of the IVP (\ref{ay}) is in $ L^{\infty}_t(\mathbb{R};L^q_s(\mathbb{R};L^p_x(\mathbb{R}^n)))$ and satisfies the estimate:
	$$\|u(x,t,s)\|_{L_{t}^{\infty} L_s^q L_{x}^{p}}\leq C\left(\|f\|_{L^2(\mathbb{R}^{n+1})} + \|G^{-1/2}g\|_{L^2(\mathbb{R}^{n+1})} \right).$$
\end{theorem}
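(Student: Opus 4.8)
The plan is to follow the scheme of Theorem~\ref{thm5}, replacing the Schr\"odinger propagator $e^{-isG}$ by the two wave propagators and the surface $S$ by the surface $S_0$ of Remark~\ref{rem1}. First, by the spectral resolution of $G$ in terms of the scaled Hermite functions $\Phi_\alpha^\lambda$, the unique solution of the IVP (\ref{ay}) is
\begin{align*}
	u(x,t,s) = \cos\big(s\sqrt{G}\big)f(x,t) + \frac{\sin\big(s\sqrt{G}\big)}{\sqrt{G}}\,g(x,t).
\end{align*}
Since $G\Phi_\alpha^\lambda = (2|\alpha|+n)|\lambda|\,\Phi_\alpha^\lambda$, writing $\mu = \mu(\alpha,\lambda) := \sqrt{(2|\alpha|+n)|\lambda|}$, Euler's formula for $\cos$ and $\sin$ gives the operator identity
\begin{align*}
	u = \tfrac12\,e^{is\sqrt{G}}\big(f - iG^{-1/2}g\big) + \tfrac12\,e^{-is\sqrt{G}}\big(f + iG^{-1/2}g\big).
\end{align*}
It is exactly the factor $\tfrac1\mu\,\hat g = \widehat{G^{-1/2}g}$ produced by $\frac{\sin(s\sqrt G)}{\sqrt G}$ on the $(\alpha,\lambda)$-component that explains the appearance of $G^{-1/2}g$ in the statement.

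Next I would recognize each half as an extension operator attached to $S_0$. On the Hermite--Fourier side the operator $e^{\mp is\sqrt G}$ multiplies the $(\alpha,\lambda)$-component by $e^{\mp is\mu}$, which matches the kernel $e^{-i\nu s}e^{-i\lambda t}\Phi_\alpha^\lambda(x)$ of (\ref{ak}) precisely on the branch $\nu = \pm\mu = \pm\sqrt{(2|\alpha|+n)|\lambda|}$, i.e.\ on $S_0$. Thus one writes $u = \mathcal{E}_{S_0}(\varTheta_+) + \mathcal{E}_{S_0}(\varTheta_-)$, where $\varTheta_\pm$ is the restriction of $\widehat{\,f\mp iG^{-1/2}g\,}$ to the appropriate branch of $S_0$. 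Applying the dual restriction estimate (\ref{ao}) for $S_0$ — supplied by the scaled Hermite--Fourier restriction theorem for that surface — to each piece over the admissible range $A_w$, and controlling the data by a Plancherel identity that matches $\|\varTheta_\pm\|_{L^2(S_0,d\sigma_0)}$ with $\|f\mp iG^{-1/2}g\|_{L^2(\mathbb{R}^{n+1})}$, the triangle inequality yields
\begin{align*}
	\|u\|_{L_t^\infty L_s^q L_x^p} \le \|\mathcal{E}_{S_0}(\varTheta_+)\|_{L_t^\infty L_s^q L_x^p} + \|\mathcal{E}_{S_0}(\varTheta_-)\|_{L_t^\infty L_s^q L_x^p} \le C\big(\|f\|_{L^2} + \|G^{-1/2}g\|_{L^2}\big).
\end{align*}
As in Theorem~\ref{thm5}, a density argument over frequency-localized data passes from Schwartz initial data to all $f\in L^2(\mathbb{R}^{n+1})$ with $G^{-1/2}g\in L^2(\mathbb{R}^{n+1})$.

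The main obstacle is the correct computation of the induced measure $d\sigma_0$ on $S_0$ and the verification that, under it, the Plancherel step genuinely identifies the extension-data norm with $\|f\mp iG^{-1/2}g\|_{L^2}$. Because $S_0$ is the square-root analogue $\nu=\sqrt{(2|\alpha|+n)|\lambda|}$ of the Schr\"odinger paraboloid $\nu=(2|\alpha|+n)|\lambda|$, the Jacobian in the $\nu$-direction differs, and it is precisely this change that shifts the admissibility relation from $\frac2q+\frac np=\frac{n+2}2$ to $\frac1q+\frac np=\frac{n+2}2$. A second delicate point is the region $\mu\to 0$ (i.e.\ $\lambda\to 0$), where $G^{-1/2}$ is unbounded; the hypothesis $G^{-1/2}g\in L^2$ is exactly what tames this low-frequency set, and the frequency localization in the density argument must be arranged so as to avoid it.
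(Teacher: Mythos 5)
Your skeleton coincides with the paper's: the half-wave splitting $u=\tfrac12 e^{is\sqrt G}(f-iG^{-1/2}g)+\tfrac12 e^{-is\sqrt G}(f+iG^{-1/2}g)$ (the paper writes this as $\sum_\pm e^{\mp is\sqrt{(2|\alpha|+n)|\lambda|}}\widehat{\varphi_\pm}$ with $\widehat{\varphi_\pm}=\tfrac12(\hat f\mp i\widehat{G^{-1/2}g})$, the same decomposition up to the labelling of the two branches), the identification of each piece with the extension operator attached to the two-sheeted surface $S_0=S_+\cup S_-$ of Remark \ref{rem1}, the dual restriction estimate (\ref{ao}) for that surface, the Plancherel identity $\|\varphi_+\|_{L^2}^2+\|\varphi_-\|_{L^2}^2=\|f\|_{L^2}^2+\|G^{-1/2}g\|_{L^2}^2$, and a final density argument. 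Your closing remark about $\mu\to 0$ needs no extra work: all norms in the argument are taken of $\varphi_\pm$ directly, and $G^{-1/2}g$ enters only as a single $L^2$ object.

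There is, however, one genuine gap: you never derive the admissibility relation $\frac1q+\frac np=\frac{n+2}2$, and the mechanism you propose for it is not the one that works. The measure on $S_0$ carries no Jacobian; like $d\sigma$ on the Schr\"odinger surface, $d\sigma_\pm$ is defined as the pushforward of the counting-times-Lebesgue measure under the projection onto $(\alpha,\lambda)$, and it is precisely this Jacobian-free choice that makes the identity (\ref{zh}), and hence the step $\|\varTheta\|_{L^2(S_0,d\sigma_\pm)}=\|\varphi_\pm\|_{L^2(\mathbb{R}^{n+1})}$, come out clean. Weighting $d\sigma_0$ by the $\nu$-derivative of $\nu=\pm\sqrt{(2|\alpha|+n)|\lambda|}$ would destroy that identity without buying anything, because the localized restriction estimate (\ref{ao}) imposes only $2<p'\le q'\le\infty$ and knows nothing of the equality constraint in $A_w$. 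In the paper the constraint is produced entirely by the scaling step that removes the frequency localization: for $\varphi_\pm$ localized in $B(0,R)$ one sets $\varphi_{\pm,R}=\varphi_\pm\circ\delta_{R^{-1}}$, and since $\sqrt G$ is homogeneous of degree one under $\delta_R$, the associated solution is $u_R(x,t,s)=u(R^{-1}x,R^{-2}t,R^{-1}s)$ --- the wave time scales like $R^{-1}$, not $R^{-2}$ as for $e^{-isG}$. Changing variables in the mixed norm gives $\|u\|_{L^\infty_tL^q_sL^p_x}\le CR^{\frac{n+2}{2}-\frac1q-\frac np}\|\varphi_\pm\|_{L^2}$, and uniformity in $R$ forces exactly the exponent relation in $A_w$; this is also the correct explanation of why $\frac2q$ becomes $\frac1q$. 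Without this step your estimate holds only for data frequency-localized at unit scale, and the density argument you invoke cannot close.
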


We prove anisotropic Strichartz estimates for the inhomogeneous Grushin-Schr\"{o}dinger equation in Theorem \ref{thm6} and for the inhomogeneous Grushin wave equation in Theorem \ref{thm8} as an application of Theorem \ref{thm5} and Theorem \ref{thm7}. Further, we discuss the validity of the restriction Theorem \ref{thm4} for $p = 2$, $1 \leq q \leq2$ in Proposition \ref{prop1} and Proposition \ref{prop2}. We also investigate the validity of the Theorems \ref{thm5}, \ref{thm7}, \ref{thm6}, \ref{thm8} for $p = 2$, $2 \leq q \leq \infty$.

We use the following notation through out this article:
\begin{itemize}
	\item $\displaystyle\sum_{\pm} f(\pm \cdot) = f(+ \cdot) + f(- \cdot)$ \quad and \quad  $\displaystyle\sum_{\pm} f(\mp \cdot) = f(- \cdot) + f(+ \cdot)$. \vspace{4pt}
	\item $\mathcal{F}_{\lambda \rightarrow s} (f(\lambda)) = \frac{1}{2 \pi} \displaystyle\int_{\mathbb{R}} e^{-i s \lambda} f(\lambda) d\lambda$.
\end{itemize}

This paper is organized as follows: We discuss about the spectral theory of the Grushin operator, properties of scaled Hermite Fourier transform on $\mathbb{R}^{n+1}$, the Grushin heat kernel and the Grushin-Schr\"odinger kernel in  Section \ref{sec2}. In section \ref{sec3} we compute the Grushin-Schr\"{o}dinger kernel on certain horizontal strips and obtain the local dispersive and local Strichartz estimate for the free Schr\"{o}dinger equation. In Section \ref{sec4} we prove the restriction theorem for the scaled Hermite Fourier transform and derive the anisotropic Strichartz estimates for the solutions to IVP (\ref{schr}) and IVP (\ref{ay}) in Section \ref{sec5}. In Section \ref{sec6} we obtain anisotropic Strichartz estimates for the inhomogeneous Grushin-Schr\"odinger equation and inhomogeneous Grushin wave equation. Finally we conclude with discussing the validity of anisotropic Strichartz estimates obtained in Section \ref{sec5} and \ref{sec6} for $p = 2$, $2 \leq q \leq \infty$ in Section \ref{sec7}.

\section{The Grushin Operator and the Grushin-Schr\"odinger kernel}\label{sec2}
In this section we discus the spectral theory for the Grushin operator and the Fourier analysis tools associated with the Grushin operator. We make use of Mehler's formula to write the integral representation of the Grushin heat kernel. We also define the Grushin-Schr\"odinger kernel in the sense of distributions.
\subsection{Spectral theory for the (scaled) Hermite and the Grushin operator:}
Let $H_k$ denote the Hermite polynomial on $\mathbb{R}$, defined by
$$H_k(x)=(-1)^k \frac{d^k}{dx^k}(e^{-x^2} )e^{x^2}, \quad k=0, 1, 2, \cdots   ,$$\vspace{.30mm}
and $h_k$ denote the normalized Hermite functions on $\mathbb{R}$ defined by
$$h_k(x)=(2^k\sqrt{\pi} k!)^{-\frac{1}{2}} H_k(x)e^{-\frac{1}{2}x^2}, \quad k=0, 1, 2, \cdots.$$\vspace{.30mm}
The higher dimensional Hermite functions denoted by $\Phi_{\alpha}$ are then obtained by taking tensor product of one dimensional Hermite functions. Thus for any multi-index $\alpha \in \mathbb{N}_0^n$ and $x \in \mathbb{R}^n$, we define
$\Phi_{\alpha}(x)=\prod_{j=1}^{n}h_{\alpha_j}(x_j).$ For $\lambda \in \mathbb{R^*} = \mathbb{R}\setminus \{0\}$, the scaled Hermite functions are defined by $\Phi^\lambda_\alpha(x) = |\lambda|^{\frac{n}{4}}$ $\Phi_{\alpha}(\sqrt{|\lambda|}x)$, they are the eigenfunctions of the (scaled) Hermite operator $H(\lambda)=-\Delta +\lambda^2 |x|^2$ with eigenvalues $(2|\alpha|+n)|\lambda|$,  where $|\alpha |=\sum_{j=1}^{n}\alpha_j$, $\alpha \in \mathbb{N}_0^n$. For each $\lambda \in \mathbb{R}^*$, the family $\{\Phi_{\alpha}^\lambda : \alpha \in \mathbb{N}_0^n\}$ is then an orthonormal basis for $L^2(\mathbb{R}^n)$. For each \(k \in \mathbb{N}\), let \(P_{k}(\lambda)\) stand for the
orthogonal projection of \(L^{2}\left(\mathbb{R}^{n}\right)\) onto the eigenspace of $H(\lambda)$ spanned by \(\left\{\Phi_{\alpha}^\lambda :|\alpha|=k\right\}\). More precisely, for $f \in L^2(\mathbb{R}^n)$
\begin{align}\label{am}
	P_k(\lambda) f = \sum_{|\alpha| = k} \langle f, \Phi_{\alpha}^\lambda\rangle \hspace{3pt}\Phi_{\alpha}^\lambda \hspace{3pt},
\end{align}
where $\langle\cdot,\cdot\rangle$ denote the standard inner product in $L^2(\mathbb{R}^n)$.
Then the spectral decomposition of \(H(\lambda)\) is explicitly given as
\begin{align}\label{ba}
	H(\lambda) f=\sum_{k=0}^{\infty}(2 k+n)|\lambda| P_{k}(\lambda) f.
\end{align}
Note that
\begin{align}\label{aaa}
	P_k(\lambda) f (x)= P_k(1)(f\circ d_{{|\lambda|}^{-\frac{1}{2}}})\circ d_{|\lambda|^{\frac{1}{2}}}(x),
\end{align}
where the dilations $d_r$ on $\mathbb{R}^{n}$ is defined by $d_r(x) = rx$ for $r > 0$.

For a Schwartz function $f	$ on $\mathbb{R}^{n+1}$, let $
f^{\lambda}(x)=\int_{\mathbb{R}} f(x, t) e^{i \lambda t} d t
$
denotes  the  inverse Fourier transform of $ f(x, t)$ in the \(t\) variable.  Then it follows that 	
\begin{align}
G f(x, t)=\frac{1}{2\pi} \int_{\mathbb{R}} e^{-i \lambda t} H(\lambda) f^{\lambda}(x) d \lambda,
\end{align}
where   the operator
$H(\lambda ) = -\Delta+ \lambda^2|x|^2 $, for $\lambda\neq 0, $  is called the (scaled) Hermite operator on $\mathbb{R}^n.$
It is well known that the Grushin operator belongs to the wide class of subelliptic operators \cite{Franchi}. Moreover, it is positive, self-adjoint, and hypoelliptic operator.
Applying the operator $G$  to the Fourier expansion $ f(x, t)=\frac{1}{2\pi}\int_{\mathbb{R}} e^{-i \lambda t} f^{\lambda}(x) d \lambda, $   we see that  $$G f(x, t)=\frac{1}{2\pi} \int_{\mathbb{R}} e^{-i \lambda t} H(\lambda) f^{\lambda}(x) d \lambda. $$
Using (\ref{ba}), the spectral decomposition of the Grushin operator is given by
\begin{align}\label{spctgr}
G f(x, t)=\frac{1}{2 \pi} \int_{\mathbb{R}} e^{-i \lambda t}\left(\sum_{k=0}^{\infty}(2 k+n)|\lambda| P_{k}(\lambda) f^{\lambda}(x)\right) d \lambda.
\end{align}
\subsection{The scaled Hermite-Fourier transform on $\mathbb{R}^{n+1}$:} For a reasonable function $f$  the scaled Fourier-Hermite transform is defined by
\begin{align}\label{def1}
	\hat{f}(\alpha,\lambda)=\int_{\mathbb{R}^n} \int_{\mathbb{R}} e^{i\lambda t}f(x,t)\Phi_\alpha^\lambda(x) dt dx = \langle f^\lambda,\Phi_\alpha^\lambda\rangle, \quad (\alpha,\lambda) \in \mathbb{N}_0^n \times \mathbb{R}^*.
\end{align}
If $f \in L^2(\mathbb{R}^{n+1})$ then $\hat{f} \in L^2(\mathbb{N}^n_0\times\mathbb{R^*})$ and satisfies the Plancherel formula
\begin{align}
	\|f\|_{L^2(\mathbb{R}^{n+1})} = \frac{1}{2 \pi} \|\hat{f}\|_{L^2(\mathbb{N}_0^n\times \mathbb{R^*})}.
\end{align}
The inversion formulae is given by
\begin{align}\label{inv1}
   f(x,t) = \frac{1}{2 \pi} \int_{\mathbb{R}} e^{-i \lambda t}\sum_{\alpha \in \mathbb{N}^n_0}\hat{f}(\alpha,\lambda)\hspace{2pt}\Phi_{\alpha}^\lambda (x )\hspace{4pt}d \lambda.
\end{align}
It  $f \in L^1(\mathbb{R}^{n+1})$, it can be seen that
\begin{align}\label{az}
	\widehat{(f\circ \delta_r)}(\alpha,\lambda) = r^{-(\frac{n}{2} + 2)} \hat{f}(\alpha,r^{-2}\lambda) ,
\end{align}
where the anisotropic dilation on $\mathbb{R}^{n+1}$ is defined by $\delta_r(x,t) = (r x,r^2t)$ for $r > 0$.

\noindent Replacing $f$ by $Gf$ in (\ref{inv1}) and comparing (\ref{def1}) with (\ref{spctgr}), we get
\begin{align}\label{shft}
	\widehat{(Gf)}(\alpha,\lambda) = (2|\alpha| + n) \hspace{1pt} |\lambda| \hspace{2pt} \hat{f}(\alpha,\lambda), \quad  (\alpha,\lambda) \in \mathbb{N}_0^n \times \mathbb{R}^*.
\end{align}
For $f \in \mathcal{S}(\mathbb{R}^{n+1})$, using (\ref{shft}) and the fact that $|\Phi_\alpha^\lambda(x) | \leq |\lambda|^{\frac{n}{4}}$, $x \in \mathbb{R}^{n+1}$, for all $N \geq 0$ we get $$|(1 + (2|\alpha| + n) \hspace{1pt} |\lambda|)^N \hat{f}(\alpha,\lambda)| \leq  |\lambda|^{\frac{n}{4}}\|(1 + G)^Nf\|_{L^1(\mathbb{R}^{n+1})},$$ for all $(\alpha,\lambda) \in \mathbb{N}_0^n \times \mathbb{R}^*$, and hence we deduce that
\begin{align}\label{zz}
	|(1 + (2|\alpha| + n) \hspace{1pt} |\lambda|)^N \hat{f}(\alpha,\lambda)| \leq |\lambda|^{\frac{n}{4}} C_{N,f}
\end{align}
for all $N \geq 0$ and for all $(\alpha,\lambda) \in \mathbb{N}_0^n \times \mathbb{R}^*$.
\subsection{The heat kernel and the Schr\"{o}dinger kernel for the  Grushin  operator}

Consider the free heat equation associated with the Grushin operator $G$:
\begin{align}\label{heat}
	\partial_s u(x,t,s) + G u(&x,t,s) = 0,\quad s \geq 0 ,\hspace{3pt} (x,t)\in \mathbb{R}^{n+1},
	\\ \nonumber u(x,t,0) &= f(x,t),
\end{align}
for an integrable function $f$ on $\mathbb{R}^{n+1}$. It is easy to check that $e^{-sG}f$ is the unique solution to the IVP (\ref{heat}). Using functional calculus for $G$, the solution $u(x,t,s)$ can be written as
\begin{align*}
  u(x,t,s) = e^{-sG} f &(x,t)
  = \frac{1}{2\pi} \int_{\mathbb{R}} e^{-i \lambda t}\sum_{\alpha \in \mathbb{N}^{n}} e^{-s(2|\alpha|+n)|\lambda|} \hat{f}(\alpha,\lambda)\Phi_{\alpha}^\lambda(x) d\lambda
\\  &= \int_{\mathbb{R}^{n}}\int_{\mathbb{R}}\left(\frac{1}{2 \pi} \int_{\mathbb{R}} e^{-i\lambda(t-t_1)} \sum_{\alpha \in \mathbb{N}^{n}} e^{-s(2|\alpha|+n)|\lambda|} \Phi_{\alpha}^\lambda(x)\Phi_{\alpha}^\lambda(y) d\lambda \right) f(y,t_1) dy dt_1
 \\ &= \int_{\mathbb{R}^{n}}\int_{\mathbb{R}} K_s(x,t;y,t_1) f(y,t_1) dy dt_1,
\end{align*}
where $ K_s(x,t;y,t_1)$ is the Grushin heat kernel given by
\begin{align} \label{ker1}
	 K_s(x,t;y,t_1) = \frac{1}{2 \pi}\int_{\mathbb{R}} e^{-i\lambda(t-t_1)} \sum_{\alpha \in \mathbb{N}^{n}} e^{-s(2|\alpha|+n)|\lambda|} \Phi_{\alpha}^\lambda(x)\Phi_{\alpha}^\lambda(y) d\lambda.
\end{align}
In view of Mehler's formula \cite{Than}, the series in (\ref{ker1}) can be further simplified and we write
 \begin{align}\label{0001}
	{K}_s ({x,t;  y, t_1})=\frac{1}{(2\pi)^{\frac{n}{2}+1}}\int_{\mathbb{R}} e^{-i \lambda (t-t_1)} \left(\frac{|\lambda|}{\sinh (2 s|\lambda|)}\right)^{\frac{n}{2}} e^{\frac{-|\lambda|}{2}\left(x^{2}+y^{2}\right) \operatorname{coth}(2s|\lambda|)} e^{\frac{|\lambda| x\cdot y}{\sinh (2s |\lambda | )}} d \lambda.
\end{align}
Performing the change of variable $s\lambda \mapsto \lambda$, the heat kernel can also be written as,
\begin{align}\label{0001a}
	{K}_s ({x,t;  y, t_1})=\frac{1}{(2 \pi s )^{\frac{n}{2}+1}}\int_{\mathbb{R}} e^{-i \frac{\lambda (t-t_1)}{s}} \left(\frac{|\lambda|}{\sinh (2 |\lambda|)}\right)^{\frac{n}{2}} e^{\frac{-|\lambda|}{2 s }\left(x^{2}+y^{2}\right) \operatorname{coth}(2 |\lambda|)} e^{\frac{|\lambda| x\cdot y}{\hspace{.5pt} s\cdot\hspace{.5pt}\sinh (2 |\lambda | )}} d \lambda.
\end{align}

As in the case of the heat equation for the Grushin operator, using functional calculus for $G$ the unique solution of the IVP (\ref{schr}) is given by
\begin{align}
	u(x,t,s) = e^{-i s G} f &(x,t)
	= \frac{1}{2\pi} \int_{\mathbb{R^*}} e^{-i \lambda t}\sum_{\alpha \in \mathbb{N}^{n}} e^{-i s (2|\alpha|+n)|\lambda|} \hat{f}(\alpha,\lambda)\Phi_{\alpha}^\lambda(x) d\lambda. \label{ap}
\end{align}
and defined in the sense of tempered distribution  as,
\begin{align}
 \int_{\mathbb{R}^{n+1}} u(x,t,s)\varphi(x,t) dx dt	&= \int_{\mathbb{R}^{n}}\int_{\mathbb{R}}\left(\frac{1}{2 \pi}  \int_{\mathbb{R^*}} e^{i\lambda t_1}  \sum_{\alpha \in \mathbb{N}^{n}} e^{-i s (2|\alpha|+n)|\lambda|} \hat{\varphi}(\alpha,-\lambda) \Phi_{\alpha}^\lambda(y) d\lambda \right) f(y,t_1) dy dt_1 \nonumber
\\ &= \int_{\mathbb{R}^{n}}\int_{\mathbb{R}} \left(\int_{\mathbb{R}^{n+1}}\mathcal{H}_s(x,t;y,t_1)\varphi(x,t) dx dt \right) f(y,t_1) dy dt_1, \nonumber
\end{align}
for $\varphi \in \mathcal{S}(\mathbb{R}^{n+1})$, where $ \mathcal{H}_s(x,t;y,t_1)$ is the Grushin-Schr\"odinger kernel given by
\begin{align} \label{ker2}
	\int_{\mathbb{R}^{n+1}}\mathcal{H}_s(x,t;y,t_1)\varphi(x,t) dx dt =\frac{1}{2 \pi}  \int_{\mathbb{R^*}} e^{i\lambda t_1}  \sum_{\alpha \in \mathbb{N}^{n}} e^{-i s (2|\alpha|+n)|\lambda|} \hat{\varphi}(\alpha,-\lambda) \Phi_{\alpha}^\lambda(y) d\lambda.
\end{align}
We proceed to prove Proposition \ref{thm0}.

\noindent{\bf \emph{Proof of Proposition \ref{thm0}} : }
Fix a function $Q \in C_c^\infty((1,\infty))$ and consider
\begin{align}\label{aj}
	f(x,t) = \frac{1}{2 \pi} \int_{1}^{\infty} e^{-i \lambda t} \Phi_0^\lambda(x) Q(\lambda) d\lambda.
\end{align} Thus $f\in\mathcal{S}(\mathbb{R}^{n+1})$ and
comparing (\ref{aj}) with the inversion formula (\ref{inv1}) we have
$$
\hat{f}(\alpha,\lambda)=
\left\{\begin{array}{lr}
        0, & \text{if } \alpha\neq 0, \alpha\in\mathbb{R}^*\\
        Q(\lambda), & \hspace{2pt}\text{if } \alpha = 0, \alpha\in\mathbb{R}^*.
                \end{array}\right.
$$
By (\ref{inv1}), the solution of the IVP (\ref{schr}) can be written as
\begin{align*}
	u(x,t,s) = e^{-isG}f(x,t) = \frac{1}{2 \pi} \int_{1}^{\infty} e^{-i \lambda (t + ns)} \Phi_0^\lambda(x) Q(\lambda) d\lambda = f(x,t + ns).
\end{align*}
$\hfill\square$

\section{Local dispersive and local Strichartz estimate}\label{sec3}
In order to establish the local dispersive and local Strichartz estimate we need to compute the Grushin-Schr\"{o}dinger kernel on strips.

Observe that $ z \mapsto H_1(z) = \frac{1}{(2 \pi)^n} \int_{\mathbb{R}^n} e^{i x \xi} e^{- z |\xi|^2} d\xi \quad~\mbox{and}~\hspace{3pt} z \mapsto H_2(z) = \frac{1}{(4 \pi z)^{\frac{n}{2}}} e^{- \frac{|x|^2}{4 z}}$ are holomorphic functions on the Right half plane. Since $H_1(z) = H_2(z)$ when $z \in \mathbb{R}$ and $Re (z) > 0$, $H_1(z) = H_2(z)$ in the Right half plane. Let $\alpha$ be a non-zero purely imaginary number and let $w_n$ be a sequence in the Right half plane converging to $\alpha$. Then an application of dominated convergence theorem ensures that $H_1(w_n)$ and $H_2(w_n)$ converges in the sense of distribution. Based on this observation the Schr\"odinger kernel can be defined for the Euclidean case. We use a similar idea to compute the Grushin-Schr\"odinger kernel on the horizontal strip defined in Theorem \ref{thm1}.
\begin{proposition}
	Fix  $0 < C < n$, the maps
	$$z \mapsto K^1_z(x,t;y,t_1) \quad and \quad z \mapsto K^2_z(x,t;y,t_1)$$
	\begin{align}
		K^1_z(x,t;y,t_1) &= \frac{1}{2 \pi} \int_{\mathbb{R}^*} e^{-i\lambda(t-t_1)} \sum_{\alpha \in \mathbb{N}^{n}} e^{-z (2|\alpha|+n)|\lambda|} \Phi_{\alpha}^\lambda(x)\Phi_{\alpha}^\lambda(y) d\lambda, \label{aa}  \\
		K^2_z(x,t;y,t_1) &= \frac{1}{(2 \pi z )^{\frac{n}{2}+1}}\int_{\mathbb{R}^*} e^{-i \frac{\lambda (t-t_1)}{z}} \left(\frac{|\lambda|}{\sinh (2 |\lambda|)}\right)^{\frac{n}{2}} e^{\frac{-|\lambda|}{2 z }\left(x^{2}+y^{2}\right) \operatorname{coth}(2 |\lambda|)} e^{\frac{|\lambda| x\cdot y}{\hspace{.5pt} z\cdot\hspace{.5pt}\sinh (2 |\lambda | )}} d \lambda \label{ab},
	\end{align}
	for all $(x,t), (y,t_1) \in \mathbb{R}^{n+1}$, are holomorphic on $D = \{z \in \mathbb{C} : Re(z) > 0\}$ and $\tilde{D}_{|t-t_1|} = \{z \in \mathbb{C} : |z| > \frac{|t-t_1|}{C} \hspace{5pt} \mbox{and}~ \hspace{5pt} Re(z) > 0 \}$ respectively. Moreover, $K^1_z = K^2_z$ on the whole domain $\tilde{D}_{|t-t_1|}$.
\end{proposition}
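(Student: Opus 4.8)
The plan is to establish the three claims—holomorphy of $K^1_z$ on $D$, holomorphy of $K^2_z$ on $\tilde D_{|t-t_1|}$, and their coincidence on $\tilde D_{|t-t_1|}$—by the mechanism of the Euclidean model recalled above: produce locally uniformly convergent integral representations so that holomorphy follows from Morera's theorem, and then match the two kernels on the positive real axis and invoke the identity theorem. Throughout, $(x,t)$ and $(y,t_1)$ are fixed and $z$ is the complex variable. I would begin with $K^1_z$ in \eqref{aa}. For each fixed $\lambda\in\mathbb{R}^*$ the inner sum $g_\lambda(z)=\sum_{\alpha}e^{-z(2|\alpha|+n)|\lambda|}\Phi_\alpha^\lambda(x)\Phi_\alpha^\lambda(y)$, grouped by $|\alpha|=k$, is majorized by a series of the form $\sum_k k^{n-1}|\lambda|^{n/2}e^{-\mathrm{Re}(z)(2k+n)|\lambda|}$ (using $|\Phi_\alpha^\lambda|\le|\lambda|^{n/4}$ and that the number of multi-indices with $|\alpha|=k$ is $O(k^{n-1})$), so it converges locally uniformly on $D$ and is holomorphic in $z$ there. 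For real $z=s>0$ it agrees, by Mehler's formula (the passage from \eqref{ker1} to \eqref{0001}), with the closed form
\[
m_\lambda(z)=\frac{1}{(2\pi)^{n/2}}\left(\frac{|\lambda|}{\sinh(2z|\lambda|)}\right)^{n/2}e^{-\frac{|\lambda|}{2}(x^2+y^2)\coth(2z|\lambda|)}e^{\frac{|\lambda|x\cdot y}{\sinh(2z|\lambda|)}},
\]
which is holomorphic in $z$ on $D$ since $\sinh(2z|\lambda|)\neq0$ whenever $\mathrm{Re}(z)>0$. Hence, for each fixed $\lambda$, the identity theorem upgrades Mehler's formula to complex time: $g_\lambda(z)=m_\lambda(z)$ on all of $D$.

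The reason for passing to $m_\lambda$ is that it furnishes a genuine $\lambda$-integrable majorant, which the crude bound above does not (it degenerates like $|\lambda|^{-n/2}$ as $\lambda\to0$). For $\mathrm{Re}(z)\ge\delta>0$ one has $|\sinh(2z|\lambda|)|\ge\sinh(2\delta|\lambda|)$, while $\coth$ maps the right half-plane into itself so $\mathrm{Re}\,\coth(2z|\lambda|)>0$; thus the first exponential factor of $m_\lambda$ has modulus $\le1$, and $\big|\tfrac{|\lambda|}{\sinh(2z|\lambda|)}\big|\le\tfrac{|\lambda|}{\sinh(2\delta|\lambda|)}\le\tfrac{1}{2\delta}$ bounds the cross term by a constant. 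This gives
\[
|g_\lambda(z)|=|m_\lambda(z)|\le C_{\delta,x,y}\left(\frac{|\lambda|}{\sinh(2\delta|\lambda|)}\right)^{n/2}\in L^1(\mathbb{R},d\lambda).
\]
Since $|e^{-i\lambda(t-t_1)}|=1$, this majorant is uniform on $\{\mathrm{Re}(z)\ge\delta\}$, so $K^1_z=\tfrac{1}{2\pi}\int_{\mathbb{R}^*}e^{-i\lambda(t-t_1)}g_\lambda(z)\,d\lambda$ converges locally uniformly on $D$ and, by Fubini together with Morera's theorem, is holomorphic on $D$.

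For $K^2_z$ in \eqref{ab}, the integrand is holomorphic in $z$ on $D\supset\tilde D_{|t-t_1|}$ for each fixed $\lambda$ (principal branch of $z^{-(n/2+1)}$), so it remains to produce a locally uniform $\lambda$-integrable majorant on $\tilde D_{|t-t_1|}$. Writing the $z$-dependent quadratic exponent as $R(\lambda)/z$ with $R(\lambda)=-\frac{|\lambda|}{2}(x^2+y^2)\coth(2|\lambda|)+\frac{|\lambda|x\cdot y}{\sinh(2|\lambda|)}\le-\frac{|\lambda|}{2}|x-y|^2\le0$, and using $\mathrm{Re}(1/z)=\mathrm{Re}(z)/|z|^2>0$, one gets $|e^{R(\lambda)/z}|\le1$; moreover $|e^{-i\lambda(t-t_1)/z}|\le e^{|\lambda||t-t_1|/|z|}$. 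Since $(|\lambda|/\sinh(2|\lambda|))^{n/2}\lesssim|\lambda|^{n/2}e^{-n|\lambda|}$ for large $|\lambda|$, the integrand is dominated by a constant multiple of $|\lambda|^{n/2}e^{-(n-|t-t_1|/|z|)|\lambda|}$. Here the hypotheses enter decisively: on $\tilde D_{|t-t_1|}$ one has $|t-t_1|/|z|<C<n$, so the net rate $n-|t-t_1|/|z|$ exceeds $n-C>0$, the majorant is integrable and locally uniform, and $K^2_z$ is holomorphic on $\tilde D_{|t-t_1|}$.

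Finally, for the coincidence I would restrict to the real ray $z=s\in(|t-t_1|/C,\infty)\subset\tilde D_{|t-t_1|}$, where both kernels equal the Grushin heat kernel: $K^1_s$ is \eqref{0001} and $K^2_s$ is \eqref{0001a}, and these agree by the change of variable $s\lambda\mapsto\lambda$ carried out in Section \ref{sec2}. Since $\tilde D_{|t-t_1|}$ is open and connected and contains this ray (which has limit points in it), and since both $K^1_z$ (holomorphic on $D\supset\tilde D_{|t-t_1|}$) and $K^2_z$ are holomorphic there, the identity theorem yields $K^1_z=K^2_z$ throughout $\tilde D_{|t-t_1|}$. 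The two genuine difficulties are the upgrade of Mehler's formula to complex time, needed to get an integrable majorant for $K^1_z$ (the pointwise Hermite bound is too lossy near $\lambda=0$), and the integrability estimate for $K^2_z$, where the constraint $|z|>|t-t_1|/C$ with $C<n$ is precisely what guarantees net exponential decay of the integrand.
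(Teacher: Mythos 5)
Your argument is correct, and for $K^2_z$ and for the coincidence step (matching on the real ray and invoking the identity theorem on the connected domain $\tilde D_{|t-t_1|}$) it is essentially identical to the paper's proof. Where you genuinely diverge is in the treatment of $K^1_z$. The paper interchanges the sum and the integral, performs the change of variables $(2|\alpha|+n)\lambda\mapsto\lambda$ in each term, and deduces holomorphy from per-term bounds of the form $C\int_{\mathbb{R}}e^{-a|\lambda|}|\lambda|^{n/2}\,d\lambda$; this leaves the locally uniform convergence of the resulting series over $\alpha\in\mathbb{N}_0^n$ implicit (the visible prefactors $(2|\alpha|+n)^{-(n/2+1)}$ alone are not summable once $n\ge 2$, since there are $\sim k^{n-1}$ multi-indices with $|\alpha|=k$). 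You instead first sum the Hermite series in closed form for each fixed $\lambda$ by analytically continuing Mehler's formula in the time variable --- i.e.\ you verify $g_\lambda=m_\lambda$ on $(0,\infty)$ and upgrade to $D$ by the identity theorem --- and only then integrate in $\lambda$, where the closed form supplies the genuine majorant $C_{\delta,x,y}\bigl(|\lambda|/\sinh(2\delta|\lambda|)\bigr)^{n/2}\in L^1$ on $\{\operatorname{Re}(z)\ge\delta\}$. This buys a cleaner and more robust justification of the holomorphy of $K^1_z$ (every exchange of limits is dominated), at the cost of an extra identity-theorem step in the $z$ variable for each fixed $\lambda$ and of having to fix a holomorphic branch of $\bigl(|\lambda|/\sinh(2z|\lambda|)\bigr)^{n/2}$ on the simply connected domain $D$, which you should state explicitly.

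Two small points. First, your intermediate inequality $R(\lambda)\le -\tfrac{|\lambda|}{2}|x-y|^2$ is not quite right (the correct bound is $R(\lambda)\le -\tfrac{|\lambda|}{2\sinh(2|\lambda|)}|x-y|^2$, and for $x=-y$ and large $|\lambda|$ your version fails); this is harmless because the only fact you use is $R(\lambda)\le 0$, which is true. Second, when you invoke Mehler's formula for real $z=s>0$ you are implicitly using the absolute convergence of the Hermite series there, which your crude majorant $\sum_k k^{n-1}|\lambda|^{n/2}e^{-\operatorname{Re}(z)(2k+n)|\lambda|}$ does provide for each fixed $\lambda\neq 0$, so the step is sound.
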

\begin{proof}
	Interchanging summation and integration in (\ref{aa}) and performing the change of variable $(2|\alpha| + n)\lambda \mapsto \lambda $ in each integral, we get
	\begin{align*}
		K^1_z(x,t;y,t_1) = \frac{1}{2 \pi}&\sum_{\alpha \in \mathbb{N}^{n}} \frac{1}{(2|\alpha| + n)^{\frac{n}{2}+1}} \\
		&\times \int_{\mathbb{R}} e^{-i\frac{\lambda(t-t_1)}{2 |\alpha| + n}}  e^{-z |\lambda|} \Phi_{\alpha}\left(\sqrt{\frac{|\lambda|}{2 |\alpha| + n}} x \right) \Phi_{\alpha}\left(\sqrt{\frac{|\lambda|}{2 |\alpha| + n} } y\right) |\lambda|^{\frac{n}{2}}d\lambda,
	\end{align*}
	where each term of the above summation is an entire function. Since, $|\Phi_\alpha(x)| \leq 1$ uniformly, we obtain
	\begin{align*}
		\int_{\mathbb{R}} \left|e^{-i\frac{\lambda(t-t_1)}{2 |\alpha| + n}}  e^{-z |\lambda|} \Phi_{\alpha}\left(\sqrt{\frac{|\lambda|}{2 |\alpha| + n}} x \right) \Phi_{\alpha}\left(\sqrt{\frac{|\lambda|}{2 |\alpha| + n} } y\right) \right| |\lambda|^{\frac{n}{2}}d\lambda \leq C \int_{\mathbb{R}} e^{-a|\lambda|} |\lambda|^{\frac{n}{2}} d\lambda < \infty
	\end{align*}
	and
	\begin{align*}
		\int_{\mathbb{R}} \left|\partial_z\left( e^{-i\frac{\lambda(t-t_1)}{2 |\alpha| + n}}  e^{-z |\lambda|} \Phi_{\alpha}\left(\sqrt{\frac{|\lambda|}{2 |\alpha| + n}} x \right) \Phi_{\alpha}\left(\sqrt{\frac{|\lambda|}{2 |\alpha| + n} } y\right) \right) \right| & |\lambda|^{{\frac{n}{2}}+1}d\lambda \\
		&\leq C \int_{\mathbb{R}} e^{-a|\lambda|} |\lambda|^{\frac{n}{2}+1} d\lambda < \infty,
	\end{align*}
	for all $z \in \mathbb{C}$ with $Re(z) \geq a > 0$, By Lebesgue derivation theorem the map $z \mapsto K^1_z$ is holomorphic on the domain $D$.
	
	Again (\ref{ab}) can be re-written as
	\begin{align}\label{ac}
		K^2_z(x,t;y,t_1) = \frac{1}{(2 \pi z )^{\frac{n}{2}+1}}\int_{\mathbb{R}} \left(\frac{|\lambda|}{\sinh (2 |\lambda|)}\right)^{\frac{n}{2}}  e^{-i \frac{\lambda (t-t_1)}{z}} e^{-\frac{|\lambda|X(x,y,\lambda)}{2 z }}  d \lambda,
	\end{align}
	where $X(x,y,\lambda) = \frac{\left(x^{2}+y^{2}\right) \operatorname{cosh}(2 |\lambda|) - 2 x\cdot y}{\sinh (2|\lambda|)} \geq 0 $ for all $\lambda \in \mathbb{R}$ and $x,y \in \mathbb{R}^n$.
	Setting $z = |z|e^{i \arg(z) }$, we have
	\begin{align*}
		| e^{-\frac{|\lambda|X(x,y,\lambda)}{2 z }}| = e^{-\frac{|\lambda|X(x,y,\lambda)}{2 |z| }\cos \hspace{1pt}(\arg (z))} \quad \mbox{and} \quad
		|\partial_z e^{-\frac{|\lambda|X(x,y,\lambda)}{2 z }}| =\frac{|\lambda| X(x,y,\lambda)}{2 |z|^2} e^{-\frac{|\lambda|X(x,y,\lambda)}{2 |z| }\cos \hspace{1pt}(\arg (z))}.
	\end{align*}
	Also one can easily check that
	\begin{align*}
		|e^{-i \frac{\lambda (t-t_1)}{z}}| = e^{- \frac{\lambda (t-t_1) \sin \hspace{1pt} (\arg(z))}{|z|}} \quad \mbox{and} \quad
		|\partial_z e^{-i \frac{\lambda (t-t_1)}{z}}| = \frac{|t-t_1|\hspace{1pt}|\lambda|}{|z|^2}e^{- \frac{\lambda (t-t_1) \sin \hspace{1pt} (\arg(z))}{|z|}}.
	\end{align*}
	Hence for all $\lambda \in \mathbb{R}^* $, $(x,t),(y,t_1) \in \mathbb{R}^{n+1}$ and  all $z \in \mathbb{C}$ with $Re (z) \geq a >0$,
	\begin{align*}
		\left|e^{-i \frac{\lambda (t-t_1)}{z}-\frac{|\lambda|X(x,y,\lambda)}{2 z }}\right| \leq e^{\frac{|\lambda|\hspace{1pt} |t-t_1|}{|z|}} ~~~\mbox{and}~~~
		\left|\partial_z \left( e^{-i \frac{\lambda (t-t_1)}{z}-\frac{|\lambda|X(x,y,\lambda)}{2 z }} \right)\right| \leq e^{ \frac{|\lambda|\hspace{1pt} |t-t_1|}{|z|}} \left( \frac{|\lambda|\hspace{1pt}|t-t_1|}{|z|^2} + \frac{1}{a}\right)
	\end{align*}
	Taking $0 < C < n$ and combining formula (\ref{ac}) together with the Lebesgue derivation theorem, we deduce that the map $z \mapsto K^2_z$ is holomorphic on
	$\tilde{D}_{|t-t_1|}$.
	By (\ref{ker1}) and (\ref{0001a}), the maps $K^1_z$ and $K^2_z$ coincide on the intersection of the real line with $\tilde{D}_{|t-t_1|}$ , we conclude that $K^1_z = K^2_z$ on  $\tilde{D}_{|t-t_1|}$.
\end{proof}
With this information, we prove Theorem \ref{thm1} using the notations used in Proposition \ref{thm0}.
\noindent{\bf \emph{Proof of Theorem \ref{thm1}}:}
Choose a sequence $(z_p)_{p \in \mathbb{N}}$ of elements in $\tilde{D}_{|t-t_1|}$ which converges to $ i s $, for some $s \in \mathbb{R}^*$. For $f \in \mathcal{S}(\mathbb{R}^{n+1})$,
\begin{align}\label{ad}
	\int_{\mathbb{R}^{n}}\int_{\mathbb{R}} K^1_{z_p}(x,t;y,t_1) f(y,t_1) dy dt_1 = \frac{1}{2\pi} \sum_{\alpha \in \mathbb{N}_0^{n}}\int_{\mathbb{R}} e^{-i \lambda t} e^{-z_p (2|\alpha|+n)|\lambda|} \hat{f}(\alpha,\lambda)\Phi_{\alpha}^\lambda(x) d\lambda.
\end{align}
 From (\ref{zz}), we have
 $|\hat{f}(\alpha,\lambda)| \leq C_{N,f} |\lambda|^{\frac{n}{4}} (1 + (2|\alpha| + n)|\lambda|)^{-N}$, for $N \geq n + 2$.
 Therefore,
\begin{align*}
\sum_{\alpha \in \mathbb{N}_0^{n}}\int_{\mathbb{R}} \left|e^{-i \lambda t} e^{-z_p (2|\alpha|+n)|\lambda|} \hat{f}(\alpha,\lambda) \Phi_{\alpha}^\lambda(x)\right| &d\lambda \\
\leq C_{N,f} &\sum_{\alpha \in \mathbb{N}_0^n} \frac{1}{(2|\alpha|+ n)^{\frac{n}{2} + 1}} \int_{\mathbb{R}} (1 + |\lambda|)^{-N} |\lambda|^{{\frac{n}{2}}} d\lambda < \infty,
\end{align*}
where the last term obtained by performing the change of variables $(2|\alpha| + n)\lambda \mapsto \lambda$ in each integral. Applying Lebesgue dominated convergence theorem, we get
\begin{align}\label{ae}
    \lim_{p \rightarrow\infty} 	\int_{\mathbb{R}^{n}}\int_{\mathbb{R}} K^1_{z_p}(x,t;y,t_1) f(y,t_1) dy dt_1 
     =  e^{-isG}f(x,t).
\end{align}
Using the fact that $|z_p| > \frac{|t-t_1|}{C}$ and $0 < C < n$ together with (\ref{ac}), gives
$$\int_{\mathbb{R}} \left| \left(\frac{|\lambda|}{\sinh (2 |\lambda|)}\right)^{\frac{n}{2}}  e^{-i \frac{\lambda (t-t_1)}{z_p}} e^{-\frac{|\lambda|X(x,y,\lambda)}{2 z_p }} \right| d \lambda  \leq \int_{\mathbb{R}} \left(\frac{|\lambda|}{\sinh (2 |\lambda|)}\right)^{\frac{n}{2}}  e^{C|\lambda|}  d \lambda < \infty.$$
Applying Lebesgue dominated convergence theorem, we deduce that \begin{align}\label{af}
	\lim_{p\rightarrow\infty} K^2_{z_p}(x,t;y,t_1) = \frac{1}{(2 \pi s )^{\frac{n}{2}+1}}\int_{\mathbb{R}} e^{- \frac{\lambda (t-t_1)}{s}} \left(\frac{|\lambda|}{\sinh (2 |\lambda|)}\right)^{\frac{n}{2}} e^{\frac{i|\lambda|}{2 s }\left(x^{2}+y^{2}\right) \operatorname{coth}(2 |\lambda|)} e^{\frac{- i |\lambda| x\cdot y}{\hspace{.5pt} s\cdot\hspace{.5pt}\sinh (2 |\lambda | )}} d \lambda
\end{align}
for all $(x,t),(y,t_1) \in \mathbb{R}^{n+1}$ satisfying $|t-t_1| < n |s|$.
The proof of Theorem \ref{thm1} follows from (\ref{ae}), (\ref{af}) and the fact that $K^1_{z_p} = K^2_{z_p}$ on $\tilde{D}_{|t-t_1|}$.
$\hfill\square$


\noindent{\bf \emph{Proof of Theorem \ref{thm2}}:} Since the linear Schr\"{o}dinger equation on $\mathbb{R}^{n+1}$ is invariant by translation, it suffices to prove the result for $w_0 = 0$. Let $f$ is supported in $B(w_0,R_0)$, the solution to the IVP (\ref{schr}) with the given initial data $f$ is given by
\begin{align}\label{ag}
	u(x,t,s)  =	\int_{\mathbb{R}^{n}}\int_{\mathbb{R}} \mathcal{H}_s(x,t;y,t_1) f(y,t_1) dy dt_1
\end{align}
on any ball $B(0,\frac{1}{2} k |s| )$ with $0 < k < n$, where
\begin{align*}
\mathcal{H}_s(x,t;y,t_1) =\frac{1}{(2 \pi s )^{\frac{n}{2}+1}}\int_{\mathbb{R}} e^{- \frac{\lambda (t-t_1)}{s}} \left(\frac{|\lambda|}{\sinh (2 |\lambda|)}\right)^{\frac{n}{2}} e^{\frac{i|\lambda|}{2 s }\left(x^{2}+y^{2}\right) \operatorname{coth}(2 |\lambda|)} e^{\frac{- i |\lambda| x\cdot y}{\hspace{.5pt} s\cdot\hspace{.5pt}\sinh (2 |\lambda | )}} d \lambda:
\end{align*}
Since for any $(x,t) \in B(0,\frac{1}{2} k |s| )$ and any $(y,t_1) \in B(0,R_0)$, we have
$$|t-t_1| < \frac{1}{2}\hspace{1pt} k \hspace{1pt} |s| + R_0 < \frac{1}{2}\hspace{1pt} n\hspace{1pt} |s| < n \hspace{1pt}  |s|$$
provided that $|s| >  \frac{2 R_0}{n-k}.$
Note that
\begin{align}
  \|\mathcal{H}_s\|_{L^\infty(B(0,\frac{1}{2} k |s| )\times B(0,R_0))} \leq \frac{1}{(2 \pi |s| )^{\frac{n}{2}+1}} \int_{\mathbb{R}}\left(\frac{|\lambda|}{\sinh (2 |\lambda|)}\right)^{\frac{n}{2}} e^{\frac{n|\lambda|}{2}} d\lambda :=  \frac{M}{|s| ^{\frac{n}{2}+1}},
\end{align}
for $|s| >  \frac{2 R_0}{n-k}$. Using (\ref{ag}), we obtain the following $L^1 - L^\infty$ estimate,
\begin{align}\label{bc}
	\|u(\cdot,s)\|_{L^\infty (B(w_0,\frac{1}{2} k |s| ))} \leq \frac{M}{|s|^{\frac{n}{2}+1}} \|f\|_{L^{1}(\mathbb{R}^{n+1})}.
\end{align}

Furthermore, using the unitarity of $e^{-isG}$, we have
\begin{align}\label{bd}
	\|u(\cdot,s)\|_{L^2 (B(w_0,\frac{1}{2} k |s| ))} \leq	\|u(\cdot,s)\|_{L^2(\mathbb{R}^{n+1})} = \|f\|_{L^{2}(\mathbb{R}^{n+1})}.
\end{align}
Interpolating (\ref{bc}) and (\ref{bd}), for all $2 \leq p \leq \infty$ and for all $|s| >  \frac{2 R_0}{n-k}$,  we get (\ref{ai}).

$\hfill\square$

As a by-product of (\ref{ai}), we obtain the local Strichartz estimate for the IVP (\ref{schr}) under certain admissible condition for the pair $(p,q)$.

\noindent{\bf \emph{Proof of Theorem \ref{thm3}} : }
Since $f$ is supported in $B(w_0,R_0)$, applying the H\"{o}lder inequality, we obtain $\|f\|_{L^{p'}(\mathbb{R}^{n+1})} \leq R_0^{(n+1)(\frac{1}{2}-\frac{1}{p})} \|f\|_L^2(\mathbb{R}^{n+1})$ and hence for all $2 \leq p \leq \infty$, (\ref{ai}) becomes
\begin{align*}
	\|u(\cdot,s)\|_{L^p(B(w_0,\frac{1}{2} k s ))} \leq C(k) \frac{R_0^{(n+1)(\frac{1}{2}-\frac{1}{p})}}{|s|^{(n+2)(\frac{1}{2}-\frac{1}{p})}}\|f\|_{L^{2}(\mathbb{R}^{n+1})},
\end{align*}
for all $|s| \geq C_k R_0$. Using the fact $\frac{1}{p} + \frac{1}{q} = \frac{1}{2}$ we get (\ref{bf}).
$\hfill\square$

\section{Restriction theorem on $\mathbb{N}^n_0\times\mathbb{R}^*\times\mathbb{R}$}\label{sec4}
For a Schwartz class function $f$ on $\mathbb{R}^{n+2}$, the scaled Hermite-Fourier transform of $f$ on $\mathbb{R}^{n+2}$ is defined by
\begin{align}\label{def2}
	\hat{f}(\alpha,\lambda,\nu)=\int_{\mathbb{R}^n} \int_{\mathbb{R}}\int_{\mathbb{R}} e^{i\lambda t} e^ {i \nu s}f(x,t,s)\Phi_\alpha^\lambda(x) ds dt  dx = \langle f^{\lambda,\nu},\Phi_\alpha^\lambda\rangle,
\end{align}
for any $(\alpha,\lambda,\nu) \in \mathbb{N}_0^n \times \mathbb{R}^* \times \mathbb{R}$.
If $f \in L^2(\mathbb{R}^{n+2})$ then $\hat{f} \in L^2(\mathbb{N}^n_0\times\mathbb{R^*}\times \mathbb{R})$ and satisfies the Plancherel formula
\begin{align}\label{zv}
	\|f\|_{L^2(\mathbb{R}^{n+2})} = \frac{1}{(2 \pi)^2} \|\hat{f}\|_{L^2(\mathbb{N}_0^n\times \mathbb{R^*}\times \mathbb{R})}.
\end{align}
The inversion formula is given by
\begin{align}\label{inv2}
	f(x,t,s) = \frac{1}{(2 \pi)^2}\int_{\mathbb{R}} \int_{\mathbb{R}} e^{-i \nu s} e^{-i \lambda t} \sum_{\alpha \in \mathbb{N}^n_0}\hat{f}(\alpha,\lambda,\nu)\hspace{2pt}\Phi_{\alpha}^\lambda (x )\hspace{4pt}d \lambda  d\nu.
\end{align}

\subsection{A surface measure:}\label{aq} Let us consider the surface
\begin{align}\label{sur}
	S = \{(\alpha,\lambda,\nu) \in \mathbb{N}_0^n \times \mathbb{R}^* \times \mathbb{R} : \nu = (2 |\alpha| + n )|\lambda| \}.
\end{align}
We endow $S$ with the measure $d\sigma$ induced by the projection $\pi : \mathbb{N}_0^n \times \mathbb{R}^* \times \mathbb{R} \rightarrow\mathbb{N}_0^n \times \mathbb{R}^* $ onto the first two factors, where $\mathbb{N}_0^n \times \mathbb{R}^*$ endowed with the measure $d\mu\otimes d\lambda$, $d\mu$ and $d\lambda$ denote the counting measure on $\mathbb{N}^n_0$ and Lebesgue measure on $\mathbb{R^*}$ respectively. More explicitly, for any integrable function $\varTheta$ on $S$ we have
$$\int_{S} \varTheta \hspace{3pt} d\sigma = \sum_{\alpha \in \mathbb{N}^n_0}\int_{\mathbb{R^*}} \varTheta (\alpha,\lambda,(2|\alpha|+n)|\lambda|) \hspace{2pt} d\lambda.$$
By construction it is clear that if $\Theta = \hat{f}\circ \pi|_S$, where $\hat{f}$ is a function on $\mathbb{N}_0^n \times \mathbb{R}^* \times \mathbb{R}$, then for all $1\leq p \leq \infty$
\begin{align}\label{zh}
	\|\Theta\|_{L^p(S,d\sigma)} = \|\hat{f}\|_{L^p(\mathbb{N}_0^n \times \mathbb{R}^* \times \mathbb{R})}.
\end{align}
In view of Fourier restriction theorem for smooth compact surfaces due to Tomas \cite{PT} in the Euclidean space, (\ref{zh}) is well-defined for an appropriate function $f$ and holds good for the compact subsets of $S$. Therefore, we consider the surface $S$ endowed with the surface measure $d\sigma_{loc} = \psi(\nu) d\sigma$ defined by
\begin{align}\label{al}
	\int_{S} \varTheta \hspace{3pt} d\sigma_{loc} = \sum_{\alpha \in \mathbb{N}^n_0}\int_{\mathbb{R^*}} \varTheta (\alpha,\lambda,(2|\alpha|+n)|\lambda|)\hspace{2pt} \psi((2|\alpha| + n)|\lambda|) \hspace{2pt}d\lambda.
\end{align}
with $\psi$ any smooth, even, compactly supported function in $\mathbb{R}$ with an $L^\infty$ norm at most 1.

The restriction operator, $\mathcal{R}_{S_{loc}}$ and the extension operator, $\mathcal{E}_{S_{loc}}$ with respect to the surface $(S,d\sigma_{loc})$ can be computed as $\mathcal{R}_{S_{loc}} f = \hat{f}|_{S}$ and
\begin{align}\label{ak}
	\mathcal{E}_{S_{loc}} (\varTheta)(x,t,s) = \frac{1}{(2 \pi)^2}\sum_{\alpha \in \mathbb{N}^n_0}\int_{\mathbb{R^*}}  e^{-i (2|\alpha|+n)|\lambda| s} e^{-i \lambda t} \varTheta (\alpha,\lambda,\nu)\hspace{2pt}\Phi_{\alpha}^\lambda (x )\hspace{4pt} d\lambda.
\end{align}
\subsection{Restriction theorem:}
We are in a position to prove Theorem \ref{thm4}. Before proceeding to prove we need to observe the following:
\begin{lemma}\label{lem1}
	Let $ \phi \in \mathcal{S}(\mathbb{R}^n)$ and $\lambda \in \mathbb{R^*}$, then for all $1 \leq p \leq 2$,
	\begin{align}
		\|P_k(\lambda)\phi\|_{L^{p'}(\mathbb{R}^n)} \leq C |\lambda|^{\frac{n}{2}(1-\frac{2}{p'})}(2k + n)^{\frac{n-1}{2}(1-\frac{2}{p'})} \|\phi\|_{L^p(\mathbb{R}^n)},
	\end{align}
where $p'$ is the conjugate exponent of $p$, i.e., $\frac{1}{p} + \frac{1}{p'} = 1$.
\end{lemma}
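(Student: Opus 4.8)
The plan is to reduce the scaled estimate to the unscaled Hermite projection $P_k:=P_k(1)$ by a dilation argument, and then to obtain the unscaled bound by interpolating the trivial $L^2\to L^2$ bound against an $L^1\to L^\infty$ estimate for the projection kernel. First I would exploit the scaling relation (\ref{aaa}), which reads $P_k(\lambda)\phi(x)=P_k(1)\big(\phi\circ d_{|\lambda|^{-1/2}}\big)\big(|\lambda|^{1/2}x\big)$. Taking $L^{p'}$ norms and changing variables, together with the elementary scaling of the $L^p$ norm, gives
\begin{align*}
\|P_k(\lambda)\phi\|_{L^{p'}}=|\lambda|^{-\frac{n}{2p'}}\big\|P_k(1)(\phi\circ d_{|\lambda|^{-1/2}})\big\|_{L^{p'}},\qquad \big\|\phi\circ d_{|\lambda|^{-1/2}}\big\|_{L^p}=|\lambda|^{\frac{n}{2p}}\|\phi\|_{L^p}.
\end{align*}
Since $-\tfrac{n}{2p'}+\tfrac{n}{2p}=\tfrac{n}{2}\big(1-\tfrac{2}{p'}\big)$ and the factor $(2k+n)$ is scale-invariant, the lemma follows at once from the unscaled estimate
\begin{align*}
\|P_k\phi\|_{L^{p'}(\mathbb{R}^n)}\leq C\,(2k+n)^{\frac{n-1}{2}\left(1-\frac{2}{p'}\right)}\|\phi\|_{L^p(\mathbb{R}^n)},\qquad 1\leq p\leq 2,
\end{align*}
which is therefore what remains to be proved.

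For the unscaled estimate I would use that $P_k$ is an orthogonal projection, so trivially $\|P_k\|_{L^2\to L^2}=1$. Moreover $P_k$ is an integral operator with kernel $\Phi_k(x,y)=\sum_{|\alpha|=k}\Phi_\alpha(x)\Phi_\alpha(y)$, whence $\|P_k\|_{L^1\to L^\infty}=\sup_{x,y}|\Phi_k(x,y)|$. Granting the kernel bound
\begin{align*}
\sup_{x,y\in\mathbb{R}^n}|\Phi_k(x,y)|\leq C\,(2k+n)^{\frac{n-1}{2}},
\end{align*}
Riesz--Thorin interpolation between the pairs $(L^2,L^2)$ and $(L^1,L^\infty)$ at the parameter $\theta=1-\tfrac{2}{p'}$ lands exactly on $L^p\to L^{p'}$ with the conjugate exponents ($\tfrac1p+\tfrac1{p'}=1$) and produces the operator norm $1^{1-\theta}\big[C(2k+n)^{(n-1)/2}\big]^{\theta}$, which is precisely the claimed unscaled bound. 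Note that this is consistent internally: the endpoint $p=1$, $p'=\infty$ of the lemma is nothing but the kernel bound itself.

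The remaining kernel bound is the main obstacle, and it is where the genuine work lies. Here I would first discard the off-diagonal part: writing $\Phi_k(x,y)$ as the Euclidean inner product of the finite vectors $(\Phi_\alpha(x))_{|\alpha|=k}$ and $(\Phi_\alpha(y))_{|\alpha|=k}$, the Cauchy--Schwarz inequality gives $|\Phi_k(x,y)|\leq \Phi_k(x,x)^{1/2}\,\Phi_k(y,y)^{1/2}\leq \sup_z\Phi_k(z,z)$, so it suffices to bound the diagonal $\sup_x\sum_{|\alpha|=k}\Phi_\alpha(x)^2$. This diagonal is radial (the eigenspace is rotation invariant), and via Mehler's generating function (as in (\ref{0001})) one identifies it, up to a normalizing constant of size $\sim k^{1-n/2}$, with $L_k^{(n/2-1)}(|x|^2)e^{-|x|^2}$, i.e.\ with a normalized Laguerre function. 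The estimate then reduces to the classical uniform bounds for Laguerre functions near the classical turning point (cf.\ Thangavelu \cite{Than}), which yield the growth $(2k+n)^{(n-1)/2}$; controlling this turning-point contribution is the technical heart of the argument, the interpolation and scaling steps being routine by comparison.
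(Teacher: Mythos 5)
Your proof is correct and follows essentially the same route as the paper: interpolate the trivial $L^2\to L^2$ bound for the orthogonal projection against the $L^1\to L^\infty$ kernel bound $|\lambda|^{n/2}(2k+n)^{(n-1)/2}$, which the paper simply imports from the proof of Proposition 4.4.2 in \cite{Than} (see also \cite{Manli}) together with the scaling relation (\ref{aaa}). Your extra sketch of the kernel bound is in the right spirit, though strictly speaking Mehler's formula expresses the diagonal $\Phi_k(x,x)$ as a finite linear combination of Laguerre functions $L_j^{(n/2-1)}$, $j\le k$, rather than a single one; the turning-point asymptotics then give the stated growth exactly as in Thangavelu.
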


\begin{proof}
	Since, $P_k(\lambda)$'s are orthogonal projections on $L^2(\mathbb{R}^n)$, so we have
	\begin{align}\label{za}
		\|P_k(\lambda)\phi\|_{L^{2}(\mathbb{R}^n)} \leq \|\phi\|_{L^{2}(\mathbb{R}^n)}.
	\end{align}
Using the relation (\ref{aaa}) and the $L^1 - L^\infty$ estimate in the proof of Proposition 4.4.2 in \cite{Than}, we have
    \begin{align}\label{zb}
    	\|P_k(\lambda)\phi\|_{L^{\infty}(\mathbb{R}^n)} \leq |\lambda|^{\frac{n}{2}} (2k + n)^{\frac{n-1}{2}} \|\phi\|_{L^{1}(\mathbb{R}^n)}.
    \end{align}
This estimate can also be found in the proof of Proposition 1 in \cite{Manli}. Thus, the Lemma \ref{lem1} follows by interpolating (\ref{za}) and (\ref{zb}).
\end{proof}

\noindent{\bf \emph{Proof of Theorem \ref{thm4}} : } By duality argument, it is enough to show that the boundedness of the operator $\mathcal{E}_{S_{loc}}$ from $L^2(S,d\sigma_{loc})$ to $L^{\infty}_t(\mathbb{R};L^{q'}_s(\mathbb{R};L^{p'}_x(\mathbb{R}^n)))$. Equivalently, we show that the operator $\mathcal{E}_{S_{loc}}(\mathcal{E}_{S_{loc}})^*$ is bounded from $L^{1}_t(\mathbb{R};L^{q}_s(\mathbb{R};L^{p}_x(\mathbb{R}^n)))$ to $L^{\infty}_t(\mathbb{R};L^{q'}_s(\mathbb{R};L^{p'}_x(\mathbb{R}^n)))$, where $\frac{1}{p} + \frac{1}{p'} = 1$ and $\frac{1}{q} + \frac{1}{q'} = 1$.

Let $f \in \mathcal{S}(\mathbb{R}^{n+2})$. From (\ref{ak}) and (\ref{al}), we have
\begin{align*}
	\mathcal{E}_{S_{loc}} (\mathcal{E}_{S_{loc}})^* &f(x,t,s) \\
	& = \frac{1}{(2 \pi)^2}\sum_{\alpha \in \mathbb{N}^n_0}\int_{\mathbb{R^*}}  e^{-i (2|\alpha| + n)|\lambda| s} e^{-i \lambda t} \hat{f}(\alpha,\lambda, (2|\alpha|+n)|\lambda|)\hspace{2pt}\Phi_{\alpha}^\lambda (x ) \psi((2|\alpha| + n)|\lambda|) \hspace{2pt}d\lambda  \\
	& = \frac{1}{(2 \pi)^2}\sum_{\alpha \in \mathbb{N}^n_0} \frac{1}{2|\alpha| + n}  \int_{\mathbb{R^*}}  e^{-i |\lambda| s} e^{- \frac{i \lambda t}{2|\alpha| + n} } \hat{f}(\alpha,\frac{\lambda}{2|\alpha| + n}, |\lambda|)\hspace{2pt}\Phi_{\alpha}^{\frac{\lambda}{2|\alpha| + n}} (x ) \psi(|\lambda|) \hspace{2pt}d\lambda,
\end{align*}
where the last term obtained by performing the change of variables $(2|\alpha| + n)\lambda \mapsto \lambda$ in each integral. Using (\ref{def2}), (\ref{am}) and writing $a_k = \frac{1}{2k + n}$, we obtain
\begin{align}
	\mathcal{E}_{S_{loc}} (\mathcal{E}_{S_{loc}})^*f(x,t,s)  &= \frac{1}{(2 \pi)^2}\sum_{k = 0}^{\infty} \frac{1}{2 k + n} \sum_{\pm} \int_{0}^{\infty}  e^{- i \lambda s} e^{\mp i a_k  \lambda t }P_k(a_k \lambda) f^{\pm a_k\lambda, \lambda} (x) \psi(\lambda) \hspace{2pt}d\lambda \nonumber\\
	& = C \sum_{k = 0}^{\infty} \sum_{\pm} \frac{1}{2 k + n}  \mathcal{F}_{\lambda \rightarrow  s}  \left( e^{\mp i a_k  \lambda t }P_k(a_k \lambda) f^{\pm a_k\lambda, \lambda} (x) \psi_{+}(\lambda) \hspace{2pt}\right) \label{an},
\end{align}
where $\psi_{+}(\lambda) = \psi(\lambda) \bf{1}_{\lambda > 0}$. For fixed $t \in \mathbb{R}$, applying the Hausdorff-Young inequality on the right-hand side of (\ref{an}) with respect to $s-$variable, we get
\begin{align}\label{bg}
	\| \mathcal{E}_{S_{loc}} (\mathcal{E}_{S_{loc}})^*f  \|_{L^{q'}_{s}} \leq C \sum_{k = 0}^{\infty} \sum_{\pm} \frac{1}{2 k + n}  \| \psi_{+}(\lambda) e^{\mp i a_k  \lambda t }P_k(a_k \lambda) f^{\pm a_k\lambda,\lambda} (x)  \|_{L^q_{\lambda}}.
\end{align}
Now for any function $g$ defined on $\mathbb{R}^{n+1}$ and for $q' \geq p' > 2$, by Minkowski's integral inequality, gives
\begin{align}\label{au}
	\|\mathcal{F}_{\lambda \rightarrow s} g\|_{L_s^{q'} L^{p'}_{x}}  \leq 	\|\mathcal{F}_{\lambda \rightarrow s} g\|_{L^{p'}_{x}L_s^{q'} }
	 \leq C \|g\|_{L^{p'}_{x}L_\lambda^{q} }
	 \leq C \|g\|_{L_\lambda^{q} L^{p'}_{x}}.
\end{align}
In view of (\ref{au}) and (\ref{bg}), we deduce that
\begin{align*}
	\| \mathcal{E}_{S_{loc}} (\mathcal{E}_{S_{loc}})^*f  \|_{L^{\infty}_{t} L^{q'}_{s} L^{p'}_{x}} \leq C \sum_{k = 0}^{\infty} \sum_{\pm} \frac{1}{2 k + n}  \| \psi(\lambda) P_k(a_k \lambda) f^{\pm a_k\lambda,\lambda} (x)  \|_{L^q_{\lambda} L^{p'}_{x}}.
\end{align*}
But, by Lemma \ref{lem1}, we have
\begin{align*}
	\|P_k(a_k\lambda)f^{\pm a_k\lambda,\lambda}\|_{L^{p'}_x} \leq C |a_k\lambda|^{\frac{n}{2}(1-\frac{2}{p'})}(2k + n)^{\frac{n-1}{2}(1-\frac{2}{p'})} \|\mathcal{F}_{s \rightarrow -\lambda} f(\cdot,\cdot,s)\|_{L^p_x L^1_t},
\end{align*}
which implies that
\begin{align}
	\| \mathcal{E}_{S_{loc}} (\mathcal{E}_{S_{loc}})^*f  \|_{L^{\infty}_{t} L^{q'}_{s} L^{p'}_{x}} &\leq C  \sum_{k = 0}^{\infty}  \frac{1}{(2 k + n)^{1 + \frac{1}{2}(1-\frac{2}{p'})}}  \left\| \|\mathcal{F}_{s \rightarrow -\lambda} f(\cdot,\cdot,s)\|_{L^p_x L^1_t} \psi(\lambda) \lambda^{\frac{n}{2}(1-\frac{2}{p'})} \right\|_{L^q_{\lambda}} \nonumber\\
	 \vspace{.4cm}
	& \leq C \left\| \|\mathcal{F}_{s \rightarrow -\lambda} f(\cdot,\cdot,s)\|_{L^p_x L^1_t} \psi(\lambda) \lambda^{\frac{n}{2}(1-\frac{2}{p'})}   \right\|_{L^q_{\lambda}}\nonumber\\
	& \leq C \|\mathcal{F}_{s \rightarrow -\lambda} f(\cdot,\cdot, s) \|_{L^a_\lambda L^p_x L^1_t} \| \psi(\lambda) \lambda^{\frac{n}{2}(1-\frac{2}{p'})}\|_{L^b_\lambda(\mathbb{R})},\label{bh}
\end{align}
where the last step is justified by an application of H\"{o}lder's inequality in (\ref{bh}) with $a \geq 2$, $\frac{1}{a} + \frac{1}{a'} = 1$ and $\frac{1}{a} + \frac{1}{b} = \frac{1}{q}$.
Then, taking $a' = q $ and applying  the Hausdorff-Young inequality in $\lambda-$ variable, we get
\begin{align}
\| \mathcal{E}_{S_{loc}} (\mathcal{E}_{S_{loc}})^*f  \|_{L^{\infty}_{t} L^{q'}_{s} L^{p'}_{x}}  \leq  C \| \psi(\lambda) \lambda^{\frac{n}{2}(1-\frac{2}{p'})}\|_{L^b_\lambda(\mathbb{R})} \| f \|_{L^{q}_s L^p_x L^1_t}.
\end{align}\label{zy}
Thus, (\ref{at}) follows from (\ref{zy}) by Minkowski's integral inequality for all $1 \leq q \leq p < 2$.
$\hfill\square$
\begin{remark}\label{rem1}
 We consider the surfaces
 \begin{align}
 	S_{\pm} = \{(\alpha,\lambda,\nu) \in \mathbb{N}^n_0\times\mathbb{R}^*\times\mathbb{R} : \nu^2 = (2|\alpha| + n) |\lambda| , \pm\nu >0 \}.
 \end{align}
to obtain Strichartz estimate for the wave equation (\ref{ay}). The induced measure $d\sigma_{\pm}$ by the projection $\pi : \mathbb{N}_0^n \times \mathbb{R}^* \times \mathbb{R} \rightarrow\mathbb{N}_0^n \times \mathbb{R}^* $ onto the first two factors, for the surfaces $S_{\pm}$ is given by
$$\int_{S_{\pm}} \varTheta \hspace{3pt} d\sigma_{\pm} = \sum_{\alpha \in \mathbb{N}^n_0}\int_{\mathbb{R^*}} \varTheta(\alpha,\lambda,\pm \sqrt{(2|\alpha|+n)|\lambda|}) \hspace{2pt} d\lambda,$$
for any integrable function $\varTheta$ on $S_{\pm}$.

Arguing as in the proof of Theorem (\ref{thm4}), the restriction inequality (\ref{at}) can be archived for the surface $S_0 = S_{+} \cup S_{-}$ endowed with the corresponding localized measure.
\end{remark}
\section{Anisotropic Strichartz estimates }\label{sec5}
We consider the following class of functions :
	A function $f \in \mathcal{S}(\mathbb{R}^{n+1})$ is said to be frequency localized in a ball $B(0,R)$, center at $0$ of radius $R$ if there exists a smooth, even function $\psi$ supported in $(-1,1)$ and equal to $1$ near $0$ such that
	\begin{align}\label{ze}
		f = \psi(\hspace{2pt}- R^{-2} \hspace{2pt} G\hspace{2pt} )\hspace{2pt} g,
	\end{align}
for some $g \in \mathcal{S}(\mathbb{R}^{n+1})$, which equivalent to saying that for all $(\alpha,\lambda) \in \mathbb{N}^n_0\times\mathbb{R}^*$,
   \begin{align}\label{zf}
   	\hat{f}(\alpha,\lambda) = \psi(R^{-2}(|\alpha| + n)|\lambda|) \hat{g}(\alpha,\lambda).
   \end{align}
Note that (\ref{ze}) is defined using functional calculus for $G$. By construction it is clear that any function $f \in \mathcal{S}(\mathbb{R}^{n+1})$ can be approximated by frequency localized functions in $L^2$ sense. Now we are in position to prove Theorem \ref{thm5}.

\noindent{\bf \emph{Proof of Theorem \ref{thm5}}:} First, suppose $f \in \mathcal{S}(\mathbb{R}^{n+1})$ is frequency localized in the unit ball $B(0,1)$, i.e., there exists a smooth, even function $\psi$ supported in $(-1,1)$ such that $\hat{f}(\alpha,\lambda) = \psi((|\alpha| + n)|\lambda|) \hat{g}(\alpha,\lambda)$ for some $g \in \mathcal{S}(\mathbb{R}^{n+1})$. Let $\varTheta = \hat{g}\circ \pi|_S$ and the localized measure on $S$ be $d\sigma_{loc} = \psi d\sigma$ defined in (\ref{al}).  In view of (\ref{ap}) and (\ref{ak}) we can write
$$
	e^{- i s G} f (x,t) = \mathcal{E}_{S_{loc}} (\varTheta)(x,t,s).
$$
 By the restriction inequality (\ref{ao}), we have for $2 < p \leq q \leq \infty$
\begin{align}\label{ar}
     \|e^{-i s G} f\|_{L_{t}^{\infty} L_s^q L_{x}^{p}} \leq C \|\varTheta\|_{L^2(S,d\sigma_{loc})} = C \|\hat{f}\circ\pi|_S\|_{L^2(S,d\sigma)} = C \|f\|_{L^2(\mathbb{R}^{n+1})},
 \end{align}
 where the last equality is obtained by (\ref{zh}) and the Plancherel formula (\ref{zv}).

 Next, assume that $f$ is frequency localized in the ball $B(0,R)$. By (\ref{az}) one can check that the function $f_R := f \circ \delta_{R^{-1}}$ is frequency localized in $B(0,1)$ and hence applying (\ref{ar}) we get
 \begin{align}\label{bi}
 	\|e^{-i s G} f_R(x,t)\|_{L_{t}^{\infty} L_s^q L_{x}^{p}} \leq C \|f_R\|_{L^2(\mathbb{R}^{n+1})} = C R^{\frac{n}{2} + 1} \|f\|_{L^2(\mathbb{R}^{n+1})}.
 \end{align}
 Again using (\ref{ar}), we have $e^{- i s G} f_{R}(x,t) = e^{- i R^{-2}s G} f (R^{-1} x,R^{-2} t)$, thus from (\ref{bi}) we obtain
 \begin{align}\label{bj}
 	\|e^{-i s G} f\|_{L_{t}^{\infty} L_s^q L_{x}^{p}} = R^{ - \frac{2}{q} - \frac{n}{p}}\|e^{- i R^{-2}s G} f (R^{-1} x,R^{-2} t)\|_{L_{t}^{\infty} L_s^q L_{x}^{p}} \leq C R^{\frac{n + 2}{2} - \frac{2}{q} - \frac{n}{p}}\|f\|_{L^2(\mathbb{R}^{n+1})}.
 \end{align}
So, if $f$ is frequency localized in the ball $B(0,R)$, then $$\|e^{-i s G} f\|_{L_{t}^{\infty} L_s^q L_{x}^{p}}\leq C\|f\|_{L^2(\mathbb{R}^{n+1})},$$
 provided  $ \frac{2}{q} + \frac{ n}{p} = \frac{n+2}{2}$ and hence Theorem \ref{thm5} follows by density of frequency localized functions in $L^2(\mathbb{R}^{n + 1})$.
 $\hfill\square$

\noindent{\bf \emph{Proof of Theorem \ref{thm7}} : }
 Let $f, g \in \mathcal{S}(\mathbb{R}^{n+1})$ with $G^{-1/2}g \in L^2(\mathbb{R}^{n+1})$. Using (\ref{shft}) and the inversion formula (\ref{inv1}), the unique  solution of (\ref{ay}) is given by
 \begin{align}
 	u(x,t,s) = \sum_{\pm} \frac{1}{2\pi} \int_{\mathbb{R^*}} e^{-i \lambda t}\sum_{\alpha \in \mathbb{N}^{n}} e^{\mp i s \sqrt{(2|\alpha|+n)|\lambda|}} {\widehat{\varphi_{\pm}}}(\alpha,\lambda)\Phi_{\alpha}^\lambda(x) d\lambda
 \end{align}
 where $\widehat{\varphi_{\pm}} = \frac{1}{2}\left(\hat{f} \mp i \widehat{G^{-1/2}g}\right)$.

  Let the surface $S_0 = S_{+} \cup S_{-}$ endowed with the measure $d\sigma_{\pm}$, where $S_{\pm} $, $d\sigma_{\pm}$ are defined in Remark \ref{rem1} and $\varTheta = \widehat{\varphi_{\pm}}\circ \pi|_{S_{\pm}}$ on each sheet.
  Assume that $\varphi{\pm}$ are frequency localized in $B(0,1)$. Proceeding as in Theorem \ref{thm5} for the surface $(S_0,d\sigma_{\pm})$ and using (\ref{zh}), we obtain
  \begin{align}\label{ayx}
  	\|u(x,t,s)\|_{L_{t}^{\infty} L_s^q L_{x}^{p}} \leq C \|\varTheta\|_{L^2(S,d\sigma_{\pm})} = \|\widehat{\varphi_{\pm}}\|_{L^2(\mathbb{N}_0^n\times\mathbb{R^*})}  = \|\varphi_{\pm}\|_{L^2(\mathbb{R}^{n+1})}
  \end{align}for $2 < p \leq q \leq \infty $.

 If $\varphi_{\pm}$ are frequency localized in $B(0,R)$, then the functions
 $\varphi_{\pm,R} = \varphi_{\pm}\circ\delta_{R^{-1}}$
 are frequency localized in $B(0,1)$ and give rise to the solution
 $u_R(x,t,s) = u(R^{-1}x, R^{-2}t, R^{-1}s)$.
 Thus, using (\ref{ayx}) we obtain
 $$
 	\|u(x,t,s)\|_{L_{t}^{\infty} L_s^q L_{x}^{p}} \leq C R^{\frac{n + 2}{2} - \frac{1}{q} - \frac{n}{p}}\|\varphi_{\pm}\|_{L^2(\mathbb{R}^{n+1})}.
 $$
By Plancherel formula, we have
$$\|\varphi_{\pm}\|^2_{L^2(\mathbb{R}^{n+1})} = \|\varphi_{+}\|^2_{L^2(\mathbb{R}^{n+1})} + \|\varphi_{-}\|^2_{L^2(\mathbb{R}^{n+1})} = \|f\|^2_{L^2(\mathbb{R}^{n+1})} + \|G^{-1/2}g\|^2_{L^2(\mathbb{R}^{n+1})}.$$
Hence we conclude that if $f, g $ are frequency localized in $B(0,R)$, then
$$\|u(x,t,s)\|_{L_{t}^{\infty} L_s^q L_{x}^{p}}\leq C\left(\|f\|_{L^2(\mathbb{R}^{n+1})} + \|G^{-1/2}g\|_{L^2(\mathbb{R}^{n+1})} \right)$$
provided $\frac{1}{q} + \frac{n}{p} = \frac{n + 2}{2}$. Thus Theorem \ref{thm7} follows by density argument.
  $\hfill\square$
 \section{The inhomogeneous case}\label{sec6}
 Now we consider the inhomogeneous problem:
\begin{align}\label{schr1}
	i \partial_s u(x,t,s) - G u(&x,t,s) = g(x,t,s),\quad s \in \mathbb{R} ,\hspace{3pt} (x,t)\in \mathbb{R}^{n+1},
	\\ \nonumber u(x,t,0) &= f(x,t).
\end{align}
 In this case, the solution is given by the Duhamel's formula:
\begin{align}\label{av}
 	u(x,t,s) = e^{-i s G} f(x,t) -i \int_{0}^{s} e^{- i (s-s') G} g(x,t,s') ds'.
\end{align}
\begin{theorem}\label{thm6}
	Let $f \in L^2(\mathbb{R}^{n+1})$ and $g \in L^1_s(\mathbb{R};L^2_{x,t}(\mathbb{R}^{n+1})) $.  If  $(p, q)$ lies in the admissible set $A$,
	 then the solution to the problem (\ref{schr1}), $u(x,t,s) \in L^{\infty}_t(\mathbb{R};L^q_s(\mathbb{R};L^p_x(\mathbb{R}^n)))$ and satisfies the estimate
	\begin{align}\label{bl}
		\|u(x,t,s)\|_{L_{t}^{\infty} L_s^q L_{x}^{p}}\leq C\left(\|f\|_{L^2(\mathbb{R}^{n+1})} + \|g\|_{L^1_s(\mathbb{R};L^2_{x,t}(\mathbb{R}^{n+1}))}\right).
	\end{align}
\end{theorem}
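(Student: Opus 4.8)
The plan is to split the Duhamel representation (\ref{av}) into its homogeneous and inhomogeneous parts and estimate each separately, reducing everything to the homogeneous Strichartz estimate of Theorem \ref{thm5} applied slice-by-slice in the $s'$ variable. For the homogeneous piece $e^{-isG}f$, Theorem \ref{thm5} applies verbatim and yields $\|e^{-isG}f\|_{L^\infty_t L^q_s L^p_x} \leq C\|f\|_{L^2(\mathbb{R}^{n+1})}$ for every $(p,q)\in A$. It therefore remains to control the Duhamel term
$$
v(x,t,s) = \int_0^s e^{-i(s-s')G}g(x,t,s')\,ds'
$$
in the same mixed norm.

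First I would write $v(x,t,s) = \int_{\mathbb{R}} \mathbf{1}_{[0,s]}(s')\,[e^{-i(s-s')G}g(\cdot,\cdot,s')](x,t)\,ds'$, where $\mathbf{1}_{[0,s]}$ denotes the characteristic function of the interval joining $0$ and $s$, and apply Minkowski's integral inequality (valid since $L^\infty_t L^q_s L^p_x$ is a Banach-space norm) to pull the norm inside the $s'$-integration:
$$
\|v\|_{L^\infty_t L^q_s L^p_x} \leq \int_{\mathbb{R}} \left\|\mathbf{1}_{[0,s]}(s')\,e^{-i(s-s')G}g(\cdot,\cdot,s')\right\|_{L^\infty_t L^q_s L^p_x}\,ds'.
$$
For each fixed $s'$ the cutoff merely restricts the domain of the $L^q_s$ integral and can be discarded without cost, while a translation $s \mapsto s+s'$ in the (translation-invariant) $L^q_s$ norm converts $e^{-i(s-s')G}$ into $e^{-isG}$ acting on the frozen datum $g(\cdot,\cdot,s') \in L^2_{x,t}(\mathbb{R}^{n+1})$; the outer $L^\infty_t$ and inner $L^p_x$ layers are unaffected by this shift.

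At this stage I would invoke Theorem \ref{thm5} with initial data $g(\cdot,\cdot,s')$ to obtain, for each $s'$, the bound $\|e^{-isG}g(\cdot,\cdot,s')\|_{L^\infty_t L^q_s L^p_x} \leq C\|g(\cdot,\cdot,s')\|_{L^2(\mathbb{R}^{n+1})}$. Integrating in $s'$ gives $\|v\|_{L^\infty_t L^q_s L^p_x} \leq C\int_{\mathbb{R}}\|g(\cdot,\cdot,s')\|_{L^2_{x,t}}\,ds' = C\|g\|_{L^1_s(\mathbb{R};L^2_{x,t}(\mathbb{R}^{n+1}))}$, and combining with the homogeneous estimate yields (\ref{bl}). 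The only step demanding genuine care — and the one I expect to be the main obstacle — is justifying that the time-dependent cutoff $\mathbf{1}_{[0,s]}$ can be harmlessly dropped and that Minkowski's inequality is legitimate for the three-fold mixed norm; once the oscillatory propagator is frozen at each $s'$ and reduced to $e^{-isG}$ by translation invariance in $s$, the remainder is a routine application of Theorem \ref{thm5}. Note that, because the source enters through the $L^1_s$ norm, no Christ--Kiselev argument or dual homogeneous estimate is needed.
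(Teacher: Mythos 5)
Your argument is correct and is essentially the paper's own proof: both split off the homogeneous part via Duhamel and control the source term by Minkowski's inequality followed by the homogeneous Strichartz estimate applied slice-by-slice in $s'$, which is exactly why the $L^1_s L^2_{x,t}$ norm of $g$ appears. The only cosmetic difference is that you invoke Theorem \ref{thm5} directly for each frozen datum $g(\cdot,\cdot,s')$ (using translation invariance in $s$ to remove the shift $s-s'$), whereas the paper factors $e^{-i(s-s')G}=e^{-isG}e^{is'G}$, uses unitarity of $e^{is'G}$, and then re-runs the frequency-localization, scaling, and density steps — a redundancy your version avoids.
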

\begin{proof}
Let $v(x,t,s) = i \displaystyle\int_{0}^{s} e^{- i (s-s') G} g(x,t,s') ds' $. Clearly we have
\begin{align}\label{aw}
	\|v(\cdot,\cdot,\cdot)\|_{L^\infty_tL^q_s L^p_x} \leq \int_{\mathbb{R}} \|e^{- i (\cdot) G} e^{i s' G} g(\cdot,\cdot,s')\|_{L^\infty_tL^q_s L^p_x} ds'.
\end{align}
First assume that, for all $s'$, $g(\cdot,\cdot,s')$ is frequency localized in unit ball $B(0,1)$ in $\mathbb{R}^{n+1}$. For each $s'$, using (\ref{ar}) and the unitarity of $e^{i s' G}$, (\ref{aw}) yields
\begin{align}\label{ax}
	\|v\|_{L^\infty_tL^q_s L^p_x} \leq C \int_{\mathbb{R}} \| e^{i s' G} g(\cdot,\cdot,s')\|_{L^2(\mathbb{R}^{n+1})} ds' = C \int_{\mathbb{R}} \| g(\cdot,\cdot,s')\|_{L^2(\mathbb{R}^{n+1})} ds'.
\end{align}

Now assume, for all $s$, $g(\cdot,\cdot,s)$ is frequency localized in $B(0,R)$. Letting $$g_R = R^{-2} g(\cdot,\cdot,R^{-2}s)\circ \delta_{R^{-1}} \quad \mbox{and} \quad v_R(x,s,t) = i \int_{0}^{s} e^{- i (s-s') G} g_R(x,t,s') ds' $$ we find that $g_R(\cdot,\cdot,s)$ is frequency localized in ball $B(0,1)$ for all $s$ and $v_R(x,t,s) = \\ v(R^{-1}x,R^{-2}t,R^{-2}s)$. Applying (\ref{ax}) to $g_R$ and using $\|v_R\|_{L^\infty_tL^q_s L^p_x}  =  R^{\frac{2}{q} + \frac{n}{p}} \|v\|_{L^\infty_tL^q_s L^p_x}  $ with $\|g_R\|_{L^1(\mathbb{R};L^2(\mathbb{R}^{n+1}))} = R^{\frac{n}{2} + 1} \|g\|_{L^1(\mathbb{R};L^2(\mathbb{R}^{n+1}))}$, we obtain
\begin{align}\label{bj}
	\|v\|_{L_{t}^{\infty} L_s^q L_{x}^{p}}\leq C R^{\frac{n + 2}{2} - \frac{2}{q} - \frac{n}{p}}\|g\|_{L^1_s(\mathbb{R};L^2_{x,t}(\mathbb{R}^{n+1}))}.
\end{align}
Taking $\frac{2}{q} + \frac{n}{p} = \frac{n+2}{2}$ and using density of frequency localized functions in $L^1_s(\mathbb{R};L^2_{x,t}(\mathbb{R}^{n+1}))$, (\ref{bj}) turns out to be
\begin{align}\label{bk}
	\|v\|_{L_{t}^{\infty} L_s^q L_{x}^{p}}\leq C\|g\|_{L^1_s(\mathbb{R};L^2_{x,t}(\mathbb{R}^{n+1}))},
\end{align}
holds for all $ g \in L^1(\mathbb{R};L^2(\mathbb{R}^{n+1}))$. Combining the estimate for the first term in (\ref{av}) from Theorem \ref{thm5} together with (\ref{bk}), we get (\ref{bl}).
\end{proof}

Similarly, for the inhomogeneous Grushin wave equation
\begin{align}\label{zd}
	\partial_s^2 u&(x,t,s) + Gu(x,t,s) = h(x,t,s) \quad s \in \mathbb{R} ,\hspace{3pt} (x,t)\in \mathbb{R}^{n+1}, \\
	u&(x,t,0) = f(x,t) \quad \mbox{and} \quad \partial_su(x,t,0) = g(x,t),\nonumber
\end{align}
 we obtain the following anisotropic Strichartz estimate:
\begin{theorem}\label{thm8}
		Let $f \in L^2(\mathbb{R}^{n+1}) $, $G^{-1/2}g \in L^2(\mathbb{R}^{n+1})$ and $G^{-1/2}h \in L^1_s(\mathbb{R};L^2_{x,t}(\mathbb{R}^{n+1})) $. If  $(p, q)$ lies in the admissible set
	$A_w$, then the solution $u(x,t,s)$ of the IVP (\ref{ay}) is in $ L^{\infty}_t(\mathbb{R};L^q_s(\mathbb{R};L^p_x(\mathbb{R}^n)))$ and satisfies the estimate:
	$$\|u(x,t,s)\|_{L_{t}^{\infty} L_s^q L_{x}^{p}}\leq C\left(\|f\|_{L^2(\mathbb{R}^{n+1})} + \|G^{-1/2}g\|_{L^2(\mathbb{R}^{n+1})}  + \|G^{-1/2}h\|_{L^1_s(\mathbb{R};L^2_{x,t}(\mathbb{R}^{n+1}))}\right).$$
\end{theorem}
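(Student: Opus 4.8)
The plan is to adapt the Duhamel argument of Theorem \ref{thm6} to the second-order equation (\ref{zd}), using the half-wave propagators supplied by the spectral decomposition (\ref{spctgr}) and invoking Theorem \ref{thm7} in place of Theorem \ref{thm5}. By the functional calculus for the positive self-adjoint operator $G$, the solution of (\ref{zd}) splits as $u = u_{\mathrm{hom}} + w$, where
\[
u(x,t,s) = \cos(s\sqrt{G})f + \frac{\sin(s\sqrt{G})}{\sqrt{G}}\,g + \int_0^s \frac{\sin((s-s')\sqrt{G})}{\sqrt{G}}\, h(x,t,s')\, ds',
\]
the operators $\cos(s\sqrt{G})$ and $\frac{\sin(s\sqrt{G})}{\sqrt{G}}$ being exactly the combinations of the half-wave operators $e^{\mp i s\sqrt{G}}$ used in the proof of Theorem \ref{thm7}. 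The homogeneous part $u_{\mathrm{hom}}$ is precisely the solution of (\ref{ay}) with data $f,g$, so Theorem \ref{thm7} gives directly, for $(p,q) \in A_w$,
\[
\left\|\cos(s\sqrt{G})f + \frac{\sin(s\sqrt{G})}{\sqrt{G}}\,g\right\|_{L^{\infty}_t L^q_s L^p_x} \leq C\left(\|f\|_{L^2(\mathbb{R}^{n+1})} + \|G^{-1/2}g\|_{L^2(\mathbb{R}^{n+1})}\right).
\]

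It remains to bound the Duhamel term $w$. First I would apply Minkowski's integral inequality, exactly as in (\ref{aw}), to pull the $s'$-integral outside the mixed norm:
\[
\|w\|_{L^{\infty}_t L^q_s L^p_x} \leq \int_{\mathbb{R}} \left\| \frac{\sin((\cdot - s')\sqrt{G})}{\sqrt{G}}\, h(\cdot,\cdot,s')\right\|_{L^{\infty}_t L^q_s L^p_x}\, ds'.
\]
For each fixed $s'$ the integrand is itself a solution of the homogeneous wave equation, namely the one with initial position $0$ and initial velocity $h(\cdot,\cdot,s')$, merely translated in the $s$-variable by $s'$. Since the $L^q_s$-norm is translation invariant, Theorem \ref{thm7} applied with $f=0$ and $g=h(\cdot,\cdot,s')$ yields
\[
\left\| \frac{\sin((\cdot - s')\sqrt{G})}{\sqrt{G}}\, h(\cdot,\cdot,s')\right\|_{L^{\infty}_t L^q_s L^p_x} \leq C\,\|G^{-1/2}h(\cdot,\cdot,s')\|_{L^2(\mathbb{R}^{n+1})},
\]
the weight $G^{-1/2}$ appearing because the velocity data enters Theorem \ref{thm7} only through the combination $G^{-1/2}g$ (cf.\ $\widehat{\varphi_{\pm}} = \frac{1}{2}(\hat f \mp i\,\widehat{G^{-1/2}g})$). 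The hypothesis $G^{-1/2}h \in L^1_s(\mathbb{R};L^2_{x,t}(\mathbb{R}^{n+1}))$ guarantees $\|G^{-1/2}h(\cdot,\cdot,s')\|_{L^2}<\infty$ for a.e.\ $s'$, so integrating in $s'$ gives $\|w\|_{L^{\infty}_t L^q_s L^p_x} \leq C\,\|G^{-1/2}h\|_{L^1_s(\mathbb{R};L^2_{x,t}(\mathbb{R}^{n+1}))}$, and the triangle inequality combines this with the homogeneous bound to produce the claimed estimate.

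I expect the main obstacle to be exactly the place where the wave case departs from the Schrödinger case of Theorem \ref{thm6}: the Duhamel propagator $\frac{\sin((s-s')\sqrt{G})}{\sqrt{G}}$ is \emph{not} unitary on $L^2$, so one cannot simply peel it off as in (\ref{ax}). The remedy is the one indicated above—treat the integrand as a genuine homogeneous wave solution and absorb the half-derivative loss into the $G^{-1/2}$ weight, which is precisely why the natural hypothesis is $G^{-1/2}h \in L^1_s L^2_{x,t}$ rather than $h \in L^1_s L^2_{x,t}$. To make the appeal to Theorem \ref{thm7} fully rigorous within the frequency-localized framework, one may alternatively mirror Theorem \ref{thm6} step by step: first prove the slice estimate assuming $h(\cdot,\cdot,s')$ frequency localized in $B(0,1)$ via the localized inequality (\ref{ayx}), then pass to localization in $B(0,R)$ by the anisotropic dilation $\delta_{R^{-1}}$, tracking the weight through $G^{-1/2}(\varphi\circ\delta_r) = r^{-1}(G^{-1/2}\varphi)\circ\delta_r$ and using the wave scaling $s \mapsto R^{-1}s$, so that the homogeneity exponent $\frac{n+2}{2} - \frac{1}{q} - \frac{n}{p}$ vanishes precisely on $A_w$, and finally conclude by density of frequency-localized functions in $L^1_s(\mathbb{R};L^2_{x,t}(\mathbb{R}^{n+1}))$.
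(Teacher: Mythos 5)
Your argument is correct and is precisely the one the paper intends: the paper omits the proof of Theorem \ref{thm8}, saying only that it follows ``similarly'' to Theorem \ref{thm6} with Theorem \ref{thm7} in place of Theorem \ref{thm5}, which is exactly the Duhamel-plus-Minkowski scheme you carry out. Your identification of the one genuine departure from the Schr\"odinger case --- the propagator $\sin((s-s')\sqrt{G})/\sqrt{G}$ is not unitary, but each Duhamel slice is an $s$-translate of the homogeneous wave solution with data $(0,h(\cdot,\cdot,s'))$, so the loss is absorbed into the $G^{-1/2}$ weight appearing in the hypothesis --- is exactly the right point to flag, and your fallback via frequency localization and the anisotropic scaling $\delta_{R^{-1}}$ matches the paper's treatment of Theorems \ref{thm6} and \ref{thm7}.
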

\section{The case $p = 2$ and $1 \leq q \leq 2 $ in Theorem \ref{thm4}}\label{sec7}

\begin{proposition}\label{prop1}
	The restriction inequality  (\ref{at}) also holds for $n = 1, p = 2$ and $1 \leq q \leq 2$.
\end{proposition}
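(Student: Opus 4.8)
The plan is to exploit that, at $p=2$, the projection bound of Lemma \ref{lem1} degenerates to the trivial $\|P_k(\lambda)\phi\|_{L^2_x}\le\|\phi\|_{L^2_x}$, so the $k$–decay that drives the $TT^*$ scheme of Theorem \ref{thm4} is lost: there the final sum is $\sum_k(2k+n)^{-1-\frac12(1-2/p')}$, which converges only for $p<2$ and becomes the harmonic (log–divergent) sum $\sum_k\frac1{2k+1}$ at $p=2$. To recover summability I would keep $k$ coupled to $\lambda$ through the surface relation $\nu=(2k+1)|\lambda|$ and the compact support of $\psi$, rather than decoupling them by Hausdorff--Young. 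Writing $\hat f^\nu(x,t)=\int_{\mathbb R}f(x,t,s)e^{i\nu s}\,ds$ for the partial $s$–Fourier transform, one has $\hat f(k,\lambda,\nu)=\int_{\mathbb R}e^{i\lambda t}\langle \hat f^{\nu}(\cdot,t),\Phi_k^\lambda\rangle\,dt$, so Minkowski's integral inequality in $t$ reduces the claim to a fixed–$t$ two–dimensional estimate: for $g=f(\cdot,t,\cdot)\in\mathcal S(\mathbb R^2_{x,s})$, with $\hat g^\nu(x)=\int_{\mathbb R}g(x,s)e^{i\nu s}\,ds$, it suffices to prove $\Big(\sum_{k}\int_{\mathbb R^*}|\langle\hat g^{(2k+1)|\lambda|},\Phi_k^\lambda\rangle|^2\,\psi((2k+1)|\lambda|)\,d\lambda\Big)^{1/2}\le C\|g\|_{L^q_sL^2_x}$ for $q=1$ and $q=2$, after which interpolation in the middle index $s$ (the outer $L^1_t$ and inner $L^2_x$ being fixed) yields all $1\le q\le 2$.

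For the endpoint $q=1$ I would apply Minkowski's integral inequality a second time, now in $s$, pulling the $s$–integration outside the $L^2(S,d\sigma_{loc})$ norm. This leaves, for each fixed $s$, the expression $\sum_{k}\int_{\mathbb R^*}|\langle\phi,\Phi_k^\lambda\rangle|^2\psi((2k+1)|\lambda|)\,d\lambda$ with $\phi=g(\cdot,s)$. Two features combine here: for each fixed $\lambda$ the family $\{\Phi_k^\lambda\}_k$ is an orthonormal basis, so every partial sum of $\sum_k|\langle\phi,\Phi_k^\lambda\rangle|^2$ is bounded by $\|\phi\|_{L^2_x}^2$; and the compact support of $\psi$ forces $(2k+1)|\lambda|$ into a fixed compact set, confining $\lambda$ to a bounded interval. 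A single use of Fubini then bounds the double sum by $C\|\phi\|_{L^2_x}^2$, and integrating in $s$ and $t$ gives the $q=1$ bound with constant $C\|f\|_{L^1_tL^1_sL^2_x}$. This argument in fact works for every $n$.

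For the endpoint $q=2$ I would instead use Plancherel in $s$. Performing the change of variables $\nu=(2k+1)|\lambda|$ on each sheet of $S$, the square of the fixed–$t$ quantity becomes $\int_0^\infty\psi(\nu)\Big(\sum_{k}\tfrac{2}{2k+1}|\langle\hat g^{\nu},\Phi_k^{\nu/(2k+1)}\rangle|^2\Big)\,d\nu$. Everything then hinges on the scale–uniform Bessel estimate
\[
\sum_{k\ge0}\frac{1}{2k+1}\,\big|\langle\phi,\Phi_k^{\nu/(2k+1)}\rangle\big|^2\le C\,\|\phi\|_{L^2(\mathbb R)}^2,\qquad \text{for all } \nu>0,
\]
with $C$ independent of $\nu$; granting it, integrating against $\psi(\nu)\,d\nu$ and using $\int_{\mathbb R}\|\hat g^\nu\|_{L^2_x}^2\,d\nu=2\pi\|g\|_{L^2(\mathbb R^2)}^2$ closes the $q=2$ estimate, again followed by Minkowski in $t$.

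The main obstacle is precisely this Bessel estimate, and it is where the hypothesis $n=1$ enters. The functions $\Phi_k^{\nu/(2k+1)}$ are eigenfunctions of the \emph{distinct} operators $H(\nu/(2k+1))$ but share the common eigenvalue $\nu$, and hence live at the increasing spatial scales $\sim(2k+1)/\sqrt{\nu}$; if the scaling parameter were fixed the weighted sum would be trivial, so the whole difficulty is the varying scale. I would prove the estimate by a Schur test, reducing it to off–diagonal decay of the overlaps $|\langle\Phi_k^{\nu/(2k+1)},\Phi_{k'}^{\nu/(2k'+1)}\rangle|$, which by the dilation identity for Hermite functions depend only on the ratio $(2k+1)/(2k'+1)$. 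Extracting enough decay of these overlaps, via Hermite asymptotics or Mehler's formula, to make $\sup_k\sum_{k'}\frac{|\langle\Phi_k^{\nu/(2k+1)},\Phi_{k'}^{\nu/(2k'+1)}\rangle|}{\sqrt{(2k+1)(2k'+1)}}$ finite is the crux of the proof; the simple, multiplicity–free one–dimensional Hermite spectrum is exactly what keeps this single–index sum under control and explains the restriction to $n=1$.
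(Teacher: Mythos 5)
Your overall architecture is sound and, at its core, lands on the same analytic content as the paper: after unwinding the surface measure, everything reduces to an almost-orthogonality statement for the family $\big\{(2k+1)^{-1/2}\Phi_k^{\nu/(2k+1)}\big\}_{k\ge 0}$, i.e.\ to the boundedness on $\ell^2$ of the Gram matrix with entries $\frac{\langle\Phi_k^{\nu/(2k+1)},\Phi_{k'}^{\nu/(2k'+1)}\rangle}{\sqrt{(2k+1)(2k'+1)}}$. The packaging differs: the paper works on the dual side with $T^*$ and treats all $1\le q\le 2$ at once via Hausdorff--Young in $s$, while you stay on the primal side, prove the endpoints $q=1$ and $q=2$ and interpolate. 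Your $q=1$ endpoint (Minkowski in $s$, Parseval in $x$ for each fixed $\lambda$, and the observation that $\operatorname{supp}\psi$ confines $\lambda$ to a bounded interval) is correct, elementary, and, as you note, dimension-free --- a point the paper does not isolate.

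The genuine gap is in the step you yourself flag as the crux. The unweighted Schur test you propose for the Bessel estimate cannot succeed: the overlaps are genuinely of size $|\langle\Phi_k^{\nu/(2k+1)},\Phi_{k'}^{\nu/(2k'+1)}\rangle|\sim\sqrt{\tfrac{\min(2k+1,2k'+1)}{\max(2k+1,2k'+1)}}$ (this is what the box-function asymptotics of $\Phi_k(x/\sqrt{2k+1})$ give, and it is the bound the paper itself records, namely $\frac{|\langle\cdot,\cdot\rangle|}{\sqrt{(2k+1)(2k'+1)}}\le\frac{C}{\max\{k,k'\}+1}$). Consequently
\begin{equation*}
\sup_{k}\sum_{k'}\frac{\big|\langle\Phi_k^{\nu/(2k+1)},\Phi_{k'}^{\nu/(2k'+1)}\rangle\big|}{\sqrt{(2k+1)(2k'+1)}}\;\gtrsim\;\sup_k\sum_{k'>k}\frac{1}{k'+1}\;=\;\infty ,
\end{equation*}
so the quantity you aim to ``make finite'' diverges logarithmically, and no sharper pointwise decay of the overlaps is available (for $k$ even and $k'\gg k$ the inner product really is $\gtrsim (2k'+1)^{-1/2}$, since $\Phi_k^{1/(2k+1)}$ has nonzero integral). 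The estimate you want is nevertheless true, but it must be proved by a method that sees more than row sums: either a \emph{weighted} Schur test with weights $w_k=(k+1)^{-1/2}$, or domination of the kernel by the Hilbert-type matrix $\frac{2}{k+k'+2}$, or --- as the paper does --- a splitting $\sum_{k,k'}\frac{a_ka_{k'}}{\max\{k,k'\}+1}\lesssim\sum_{k'}a_{k'}\cdot\frac{1}{k'+1}\sum_{k\le k'}a_k$ followed by Cauchy--Schwarz and Hardy's inequality. With any one of these substitutions your $q=2$ endpoint closes and the rest of your argument goes through.
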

\begin{proof}
	Note that for $n = 1$,
	\begin{align}\label{bm}
			\|\mathcal{R}_{S_{loc}} f\|^2_{L^2(S,d\sigma_{loc})} = \frac{1}{(2\pi)^2}\sum_{\pm} \sum_{k=0}^{\infty}\int_{0}^{\infty} \frac{1}{2k + 1} \left\|P_k(\pm a_k \lambda) f^{\pm a_k\lambda,\lambda}\right\|^2_{L^2(\mathbb{R})} \psi(\lambda) d\lambda.
	\end{align}
		Consider the Hilbert space $L^2(\mathbb{N}_0 \times \mathbb{R^+} ; L^{2}(\mathbb{R}))$, with respect to the inner product $ \langle \tilde{\alpha},\tilde{\beta}\rangle^{'} = \displaystyle \sum_{k=0}^{\infty}\int_{\mathbb{R^+}} \langle \tilde{\alpha}(k,\lambda),\tilde{\beta}(k,\lambda)\rangle \psi(\lambda)d\lambda$, for all $\tilde{\alpha}$, $\tilde{\beta}  \in L^2(\mathbb{N}_0 \times \mathbb{R^+} ; L^{2}(\mathbb{R}))$, where $\mathbb{R}_{+}$ denote the set of all positive reals. In view of (\ref{bm}) it is enough to prove  that the operator $T$ defined on $\mathcal{S}(\mathbb{R}^3)$ by
	$$T f = \frac{1}{(2k + 1)^{\frac{1}{2}}} P_k( a_k \lambda) f^{ a_k\lambda,\lambda},$$
	is bounded from $L^{1}_t(\mathbb{R};L^{q}_s(\mathbb{R};L^{2}_x(\mathbb{R}^n)))$ into $L^2(\mathbb{N}_0 \times \mathbb{R^+} ; L^{2}(\mathbb{R}))$ or equivalently that its adjoint $T^*$ is bounded from $L^2(\mathbb{N}_0 \times \mathbb{R^+} ; L^{2}(\mathbb{R}))$ into  $L^{\infty}_t(\mathbb{R};L^{q'}_s(\mathbb{R};L^{p'}_x(\mathbb{R}^n)))$ to obtain (\ref{at}).
	
	For $\tilde{\alpha} \in L^2(\mathbb{N}_0 \times \mathbb{R^+} ; L^{2}(\mathbb{R})) $, the operator $T^*$ can be computed to be
	$$T^*(\tilde{\alpha})(x,t,s) = \sum_{k=0}^{\infty}\int_{\mathbb{R^+}} \frac{1}{(2k + 1)^{\frac{1}{2}}} e^{- i a_k \lambda t} e^{- i |\lambda| s} \hspace{2pt}   P_k(a_k \lambda) (\tilde{\alpha}(k,\lambda))(x) \psi(\lambda) d\lambda.$$
	Using Minkowski's inequality together with the Hausdorff-Young inequality (see (\ref{au})), for any fixed $t \in \mathbb{R}$, we have
	\begin{align*}
		\|T^*(\tilde{\alpha})(\cdot,t,\cdot)\|_{L_s^{q'}L^2_x} \leq C \|g\|_{L^q_\lambda L^2_x},
	\end{align*}
	where $g(x,\lambda) = \psi(\lambda) \displaystyle\sum_{k=0}^{\infty} \frac{1}{(2k + 1)^{\frac{1}{2}}} \hspace{2pt}   P_k(a_k \lambda) (\tilde{\alpha}(k,\lambda))(x) .$
	Now
	\begin{align}\label{zc}
		\|g(\cdot,\lambda)\|^2_{L^2(\mathbb{R})} &=\psi(\lambda)^2 \sum_{k,l \geq 0} \frac{1}{(2k + 1)^{\frac{1}{2}} (2l + 1)^{\frac{1}{2}}}  \langle P_k(a_k \lambda) \tilde{\alpha}(k,\lambda),P_l(a_k \lambda) \tilde{\alpha}(l,\lambda)\rangle \nonumber\\
		& \leq  \psi(\lambda)^2 \sum_{k,l \geq 0} \frac{\|\tilde{\alpha}(k,\lambda)\|_{L^2(\mathbb{R})}  \|\tilde{\alpha}(l,\lambda)\|_{L^2(\mathbb{R})}}{(2k + 1)^{\frac{1}{2}} (2l + 1)^{\frac{1}{2}}}  |\langle \Phi_k^{a_k \lambda} ,\Phi_l^{a_l \lambda} \rangle|.
	\end{align}
The assymptotic behavior of the function $\Phi_k(\frac{x}{\sqrt{2k + 1}})$ is roughly similar to the function
\begin{align}\label{zp}
 \begin{cases}
\frac{1}{\sqrt{2}(2k+1)^{\frac{1}{2}}},& \text{if } |x| < 2k + 1,\\
0,              & \text{otherwise,}.
\end{cases}
\end{align}
	(see Remark $2.6$ of \cite{Louise}), using this one can verify that $\frac{1}{(2k + 1)^{\frac{1}{2}}(2l + 1)^{\frac{1}{2}}} |\langle\Phi_k^{\frac{1}{2k +1}},\Phi_k^{\frac{1}{2l +1}}\rangle|\leq\frac{C}{\max\{k,l\} +1}$. Thus, (\ref{zc}) turns out to be
	$$\|g(\cdot,\lambda)\|^2_{L^2(\mathbb{R})} \leq C \psi(\lambda)^2 \sum_{k \leq l} \|\tilde{\alpha}(k,\lambda)\|_{L^2(\mathbb{R})}\left(\frac{1}{l+1}\sum_{k=0}^{l+1} \|\tilde{\alpha}(k,\lambda)\|_{L^2(\mathbb{R})}\right).$$
    By Hardy's inequality, we get
	$$\|g(\cdot,\lambda)\|_{L^2(\mathbb{R})} \leq C \psi(\lambda) \left(\sum_{k = 0}^{\infty}  \|\tilde{\alpha}(k,\lambda)\|^2_{L^2(\mathbb{R})}\right)^{\frac{1}{2}}.$$
	Further, applying H\"{o}lder's inequality, we have
	$$\|g\|_{L^q_\lambda L^2_x} \leq C \|\psi(\lambda)^{\frac{1}{2}}\|_{L_{\lambda}^{\frac{2-q}{2q}}(\mathbb{R^+})} \|\tilde{\alpha}\|_{L^2(\mathbb{N}_0 \times \mathbb{R^+})}.$$
\end{proof}
The assymptotic behavior of the Hermite function $\Phi_k$ plays a decisive role (see (\ref{zp})) in the proof of Proposition \ref{prop1}.
We could not find such assymptotic behavior of the higher dimensional Hermite functions $(n \geq 2)$. But we prove the restriction inequality (\ref{at}) for $n \geq 2$ and $p = 2$ for the radial functions defined below. A function $f$ on $\mathbb{R}^{n+2}$ $(~\mbox{resp.}~\mathbb{R}^{n+1})$ is said to be radial if $f(x,t,s) = f(|x|,t,s)$ $(~\mbox{resp.}~f(x,t) = f(|x|,t))$ for all $x \in \mathbb{R}^{n+2} ~\mbox{and}~ t, s \in \mathbb{R}$. If $f$ is radial on $\mathbb{R}^{n+2}$ then ${f}^{\lambda,\nu}$ is radial on $\mathbb{R}^{n}$ for any $\lambda \in \mathbb{R}^{*}$ and $\nu \in \mathbb{R}$.
 Thus by Corollary $3.4.1$ in \cite{Than} and the relation (\ref{aaa}), for all $k \in \mathbb{N}_0$ we get
\begin{align*}
	P_{2k +1}(f^{\lambda,\nu}) = 0 \quad \mbox{and}\quad P_{2k}(\lambda)(f^{\lambda,\nu}) = R_{2k}(f^{\lambda,\nu}) L^{\frac{n}{2} - 1}_k(|\lambda||x|^2) e^{-\frac{|\lambda|}{2}|x|^2},
\end{align*}
where
\begin{align*}
	R_{2k}(f^{\lambda,\nu}) = \frac{\Gamma(k+1)}{\Gamma(k + \frac{n}{2})} |\lambda|^{\frac{n}{2}}\int_{\mathbb{R}^n} f^{\lambda,\nu}(x)L^{\frac{n}{2} - 1}_k(|\lambda||x|^2) e^{-\frac{|\lambda|}{2}|x|^2} dx,
\end{align*}
and $L^{\delta}_k$ denote the Laguerre polynomials of type $\delta (> -1)$ defined by $ L^{\delta}_k(r) =\frac{1}{k!} e^{r} r^{-\delta} \frac{d^k}{dx^k}(e^{-r} r^{k+\delta})$ for $r > 0$.
\begin{proposition}\label{prop2}
	If $f \in \mathcal{S}_{rad}(\mathbb{R}^{n+2})$, the set of all radial Schwartz class functions on $\mathbb{R}^{n+2}$, then the restriction inequality  (\ref{at}) holds for $n \geq 2$, $p = 2$ and $2 \leq q \leq \infty$.
\end{proposition}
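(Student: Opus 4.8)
The plan is to run the duality/Schur argument of Proposition \ref{prop1}, replacing the one-dimensional Hermite profile (\ref{zp}) by the explicit Laguerre description of the surviving projections. Since $f\in\mathcal{S}_{rad}(\mathbb{R}^{n+2})$ forces every $f^{\lambda,\nu}$ to be radial, only the even projections contribute: $P_{2k+1}(\lambda)f^{\lambda,\nu}=0$, while $P_{2k}(\lambda)f^{\lambda,\nu}$ is the one-dimensional multiple of $\Psi_k^\lambda(x)=c_k|\lambda|^{\frac{n}{4}}L_k^{\frac{n}{2}-1}(|\lambda||x|^2)e^{-\frac{|\lambda|}{2}|x|^2}$ recorded above, with $c_k$ the $L^2(\mathbb{R}^n)$-normalizing constant so that $\|\Psi_k^\lambda\|_{L^2}=1$. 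Hence the counting sum over $\alpha$ in $\|\mathcal{R}_{S_{loc}}f\|^2_{L^2(S,d\sigma_{loc})}$ collapses to a sum over $k\in\mathbb{N}_0$, and after the change of variables $(4k+n)\lambda\mapsto\lambda$ I obtain the radial analogue of (\ref{bm}): a constant times $\sum_{\pm}\sum_{k\ge0}\int_0^\infty (4k+n)^{-1}\|P_{2k}(\pm a_{2k}\lambda)f^{\pm a_{2k}\lambda,\lambda}\|^2_{L^2(\mathbb{R}^n)}\,\psi(\lambda)\,d\lambda$.

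As in Proposition \ref{prop1}, it then suffices to prove that $Tf=(4k+n)^{-1/2}P_{2k}(a_{2k}\lambda)f^{a_{2k}\lambda,\lambda}$ is bounded from $L^1_t(\mathbb{R};L^q_s(\mathbb{R};L^2_x(\mathbb{R}^n)))$ into the Hilbert space $L^2(\mathbb{N}_0\times\mathbb{R}^+;L^2_{rad}(\mathbb{R}^n))$ with the $\psi$-weighted inner product, equivalently that the adjoint $T^*$ maps this Hilbert space into $L^\infty_t L^{q'}_s L^2_x$. Computing $T^*$ verbatim as in Proposition \ref{prop1} and applying Minkowski's inequality followed by the Hausdorff-Young inequality (\ref{au}) in $s$, the problem reduces, for each fixed $\lambda$, to an $L^2_x$-bound for $g(\cdot,\lambda)=\psi(\lambda)\sum_{k\ge0}(4k+n)^{-1/2}P_{2k}(a_{2k}\lambda)\tilde\alpha(k,\lambda)$.

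Expanding $\|g(\cdot,\lambda)\|^2_{L^2(\mathbb{R}^n)}$ and using that for radial input $P_{2k}(a_{2k}\lambda)\tilde\alpha(k,\lambda)=\langle\tilde\alpha(k,\lambda),\Psi_k^{a_{2k}\lambda}\rangle\,\Psi_k^{a_{2k}\lambda}$ with $\|\Psi_k^{a_{2k}\lambda}\|_{L^2}=1$, Cauchy-Schwarz bounds the $(k,l)$ cross term by $\|\tilde\alpha(k,\lambda)\|_{L^2}\|\tilde\alpha(l,\lambda)\|_{L^2}\,|\langle\Psi_k^{a_{2k}\lambda},\Psi_l^{a_{2l}\lambda}\rangle|$. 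A dilation $y=\sqrt{|\lambda|}\,x$ shows that the overlap $\langle\Psi_k^{a_{2k}\lambda},\Psi_l^{a_{2l}\lambda}\rangle=\langle\Psi_k^{a_{2k}},\Psi_l^{a_{2l}}\rangle$ is independent of $\lambda$, so $\psi(\lambda)$ factors out cleanly. The heart of the matter is then the Laguerre analogue of the Hermite estimate in Proposition \ref{prop1}: the Schur-type bound $(4k+n)^{-1/2}(4l+n)^{-1/2}\,|\langle\Psi_k^{a_{2k}},\Psi_l^{a_{2l}}\rangle|\le C(\max\{k,l\}+1)^{-1}$. Granting it, Hardy's inequality sums the double series to give $\|g(\cdot,\lambda)\|_{L^2}\le C\psi(\lambda)\big(\sum_k\|\tilde\alpha(k,\lambda)\|^2_{L^2}\big)^{1/2}$, and a final H\"older inequality in $\lambda$, exactly as in Proposition \ref{prop1}, closes the estimate for the claimed range of $q$.

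The main obstacle is precisely this Schur-type bound on the overlaps $\langle\Psi_k^{a_{2k}},\Psi_l^{a_{2l}}\rangle$ of two radial eigenfunctions carried at the distinct scales $a_{2k}=\frac{1}{4k+n}$ and $a_{2l}=\frac{1}{4l+n}$. In dimension one this overlap could be read off from the elementary profile (\ref{zp}); for $n\ge2$ no such profile is available, so instead I would estimate the radial overlap integral directly via Plancherel-Rotach-type asymptotics of $L_k^{\frac{n}{2}-1}(r)e^{-r/2}$, splitting into the oscillatory bulk and the rapidly decaying tail and tracking the amplitude and essential support of each factor. Extracting the uniform decay $(\max\{k,l\}+1)^{-1}$ from these asymptotics, uniformly in the ratio of the two scales, is the delicate step on which the whole argument hinges.
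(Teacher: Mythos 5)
Your proposal follows essentially the same route as the paper: use radiality to reduce to the one-dimensional even projections given by Laguerre functions, rerun the duality/Schur argument of Proposition \ref{prop1}, and reduce everything to a decay estimate for the overlaps of rescaled Laguerre eigenfunctions. The one difference is that the ``delicate step'' you flag --- extracting the $(\max\{k,l\}+1)^{-1}$ decay from Plancherel--Rotach-type asymptotics --- is not carried out in the paper either; it is outsourced to the asymptotic behavior of Laguerre functions established in the proof of Lemma 4.2 of M\"uller \cite{Muller}, after which the paper likewise says to proceed as in Proposition \ref{prop1} with appropriate modifications.
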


\begin{proof}
	Let $f \in \mathcal{S}_{rad}(R^{n+2})$. To prove (\ref{at}) for $n \geq 2$ and $p = 2$ (proceeding as in (\ref{bm}) for $n=1$ case), it suffices to show
	\begin{align*}
		\sum_{k = 0}^{\infty} \int_{0}^{\infty}\left (|R(k,\frac{\lambda}{4k + n},\lambda)|^2 + |R(k,\frac{-\lambda}{4k + n},\lambda)|^2\right ) \lambda^{\frac{n}{2}}\phi(\lambda) d\lambda \leq  C \|f\|^2_{L_t^1L^q_sL^p_x},
	\end{align*}
	where
	\begin{align*}
		R(k,\lambda,\nu) = \left( \frac{\Gamma(k+1)}{\Gamma(k + \frac{n}{2})(4k + n)^{\frac{n}{2} + 1}}\right)^{\frac{1}{2}} \int_{\mathbb{R}^n} f^{\lambda,\nu}(x)L^{\frac{n}{2} - 1}_k(|\lambda||x|^2) e^{-\frac{|\lambda|}{2}|x|^2} dx.
	\end{align*}
Using the assymptotic behavior of Laguerre functions in the proof of Lemma 4.2 in \cite{Muller} and proceeding as in the proof of Proposition \ref{prop1} with appropriate modifications, we get (\ref{at}) for radial Schwartz class functions on $\mathbb{R}^{n+1}$.
\end{proof}
In view of Proposition \ref{prop1} the Theorems \ref{thm5}, \ref{thm6}, \ref{thm7}, \ref{thm8} holds for $p=2, 2\leq q \leq \infty$, $n=1$. In higher dimensions $(n \geq 2)$ the Theorems \ref{thm5}, \ref{thm6}, \ref{thm7}, \ref{thm8} for $p=2, 2\leq q \leq \infty$ can be obtained for radial functions in view of Proposition \ref{prop2} and the idea used in obtaining anisotropic Strichartz estimates.
\section*{Acknowledgments}
The first author wishes to thank the Ministry of Human Resource Development, India for the  research fellowship and Indian Institute of Technology Guwahati, India for the support provided during the period of this work.

\end{document}